\date{\today}
\date{\today}
\def\a{{\mathfrak a}}
\def\1{{\bf 1}}
\def\deg{\text{deg}\,}
\def\w{\wedge}
\def\dbar{\bar\partial}
\def\C{{\mathbb C}}
\def\w{{\wedge}}
\def\P{{\mathbb P}}
\def\S{{\mathcal S}}
\def\Cu{{\mathcal C}}
\def\F{{\mathcal F}}
\def\reg{{\rm reg\,}}
\def\Hom{{\rm Hom\, }}
\def\codim{{\rm codim\,}}
\def\Im{{\rm Im\, }}
\def\Ker{{\rm Ker\,  }}
\def\E{{\mathcal E}}
\def\Ok{{\mathcal O}}
\def\Re{{\rm Re\,  }}
\def\U{{\mathcal U}}
\def\J{{\mathcal J}}
\def\nbh{neighborhood }
\def\be{\begin{equation}}
\def\ee{\end{equation}}
\def\p{{\mathfrak p}}
\def\ass{\text{Ass}}
\def\BEF{\text{\tiny{bef}}}
\def\blow{{\nu}}
\newtheorem{thm}{Theorem}[section]
\newtheorem{lma}[thm]{Lemma}
\newtheorem{prop}[thm]{Proposition}
\theoremstyle{definition}
\theoremstyle{remark}
\newtheorem{preremark}[thm]{Remark}
\newtheorem{preex}[thm]{Example}
\newenvironment{remark}{\begin{preremark}}{\qed\end{preremark}}
\newenvironment{ex}{\begin{preex}}{\qed\end{preex}}
\numberwithin{equation}{section}
\title[On the effective membership problem for polynomial ideals]
{On the effective membership problem for polynomial ideals}
\begin{document}

\date{\today}

\author{Mats Andersson \& Elizabeth Wulcan}

\address{Department of Mathematics\\Chalmers University of Technology and the University of
Gothenburg\\S-412 96 G\"OTEBORG\\SWEDEN}

\email{matsa@chalmers.se \& wulcan@chalmers.se}


\thanks{The first author was
  partially supported by the Swedish
  Research Council. The second author was partially supported by
the Swedish  Research Council and by the NSF}

\begin{abstract} 

We discuss the possibility of representing elements in polynomial  
ideals in $\C^N$ with optimal degree bounds. Classical theorems due to Macaulay
and Max Noether say that such a representation is possible 
under certain conditions on the variety of the associated
homogeneous ideal. We present some variants of these results,
as well as generalizations to subvarieties of $\C^N$. 

\end{abstract}

\maketitle

\section{Introduction}\label{max}

Let $V$ be an algebraic subvariety of $\C^N$ of pure dimension $n$  
and let $F_1,\ldots,F_m$ be
polynomials in $\C^N$.  
We are interested in finding solutions to
the polynomial division problem 
\begin{equation}\label{hummer}
F_1Q_1+\cdots +F_mQ_m=\Phi 
\end{equation}
on $V$ with degree estimates, 
provided $\Phi$ is in the ideal $(F_j)$ on $V$. By a result of Hermann, \cite{Her}, if $\deg F_j\leq d$, there are polynomials $Q_j$ such that
$\deg(F_jQ_j)\le \deg \Phi+C(d,N)$, where $C(d,N)$ is like
$2(2d)^{2^N-1}$ for large $d$ and 
thus doubly exponential.
It is shown  in \cite{MM} (see also \cite[Example~3.9]{BayMum}) that in general this estimate cannot be substantially improved.

If one imposes conditions on $V$ and $F_j$ one can, however, obtain
much sharper estimates. The following two results in
$\C^n$ are classical. 

\smallskip 
\noindent 
\emph{If $F_1,\ldots, F_m$ are polynomials in $\C^n$ of degrees
  $d_1\geq \ldots\geq d_m$ with no common
zeros even at infinity and $\Phi$ is any polynomial, then one can
solve \eqref{hummer} with $\deg(F_jQ_j)\leq \max (\deg\Phi, d_1+\ldots +d_{n+1}
-n)$.}
\smallskip 

\noindent 
\emph{If $F_1,\ldots, F_n$ are polynomials in $\C^n$
such that their  common zero set   is discrete and does not intersect
the hyperplane at infinity,
and $\Phi$ belongs to the ideal $(F_j)$, then one can find polynomials
$Q_j$ such that \eqref{hummer} holds and $\deg (F_jQ_j)\le\deg\Phi$.}

\smallskip 

The first theorem is due to Macaulay, \cite{Macaul}, and the second one
is Max~Noether's AF+BG theorem, \cite{Noe}, originally stated for
$n=2$. Noether's result is clearly optimal.

\smallskip

In this paper we present extensions of these results to the case of
more general varieties $V\subset\C^N$, and also generalizations in
which we relax the condition on (the zero set of) the $F_j$. 
It grew out of our paper \cite{semester}, in
which we extended to the singular setting a framework for solving
polynomial ideal membership problems with residue techniques
introduced in \cite{A3} and further developed in
\cite{AG,EW1, EW2}, see below. The proofs in this paper follow the same
setup. 
However, at least some of the results also admit algebraic proofs, see
Remark ~\ref{lock}.

\smallskip

Throughout we
will let $X$ denote the closure of $V$ in $\P^N$, and $\reg X$ the
\emph{regularity} of $X$, see Section ~\ref{lotta} for the
definition. 
For each $F_j$ we let $f_j$ denote the induced 
section of $\Ok(\deg F_j)|_X$.

We begin with an extension of Macaulay' theorem to singular varieties;
this can easily be proved by standard arguments, cf.\ Remark ~\ref{lock}. 

\begin{thm}\label{macsats}
Let $V$ be an algebraic subvariety of $\C^N$, with closure $X$ in $\P^N$,
and let $F_1,\ldots, F_m$ be polynomials in $\C^N$ of 
degrees $d_1\geq \ldots \geq d_m$. 
Assume that $f_j$ have no common zeros on $X$. Then for each
polynomial $\Phi$ in $\C^N$ 
there are polynomials $Q_j$ such that 
\eqref{hummer} holds and 
\begin{equation*}
\deg(F_jQ_j)\le  \max(\deg\Phi, d_1+\cdots +d_{n+1} -(n+1) + \reg X).
\end{equation*} 
\end{thm}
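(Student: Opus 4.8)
The plan is to work on the projective side and use Čech–cohomology vanishing governed by the regularity $\reg X$, together with the non-vanishing hypothesis on the $f_j$. Let me sketch the strategy.

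First I would homogenize the problem. Write $\Phi$ of degree $\delta$ and let $\phi$ be the induced section of $\Ok(\delta)|_X$. The hypothesis that the $f_j$ have no common zero on $X$ means the Koszul complex of $(f_1,\ldots,f_m)$ twisted by $\Ok(\rho)|_X$ is exact in positive degrees for every $\rho$; I would then use the fact that only $n+1$ of the $f_j$ are needed to define a complete intersection-like resolution — more precisely, after a generic linear change among the $F_j$ one may assume $f_1,\ldots,f_{n+1}$ already have no common zero on $X$ (since $\dim X = n$), and it suffices to solve the membership problem with these. So I reduce to $m = n+1$ and to showing that the Koszul map $\bigoplus_{j=1}^{n+1}\Ok(\mu - d_j)|_X \to \Ok(\mu)|_X$ is surjective on global sections once $\mu \geq d_1 + \cdots + d_{n+1} - (n+1) + \reg X$.

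Next, the key cohomological step. Breaking the Koszul complex into short exact sequences of syzygy sheaves $S_k$, surjectivity on $H^0$ of the map $\bigoplus \Ok(\mu-d_j)|_X \to \Ok(\mu)|_X$ follows if $H^1(X, S_1(\mu)) = 0$, and chasing up the complex this reduces to a vanishing $H^k(X, \Lambda^k E(\mu)) = 0$ for $k \geq 1$, where $E = \bigoplus_j \Ok(-d_j)$. Concretely $\Lambda^k E(\mu)$ is a direct sum of line bundles $\Ok(\mu - d_{j_1} - \cdots - d_{j_k})|_X$. Since $X$ is $m$-regular with $m = \reg X$, one has $H^i(X, \Ok(t)|_X) = 0$ for $i \geq 1$ and $t \geq \reg X - i$; applying this with $i = k$ and $t = \mu - d_{j_1} - \cdots - d_{j_k}$, the worst case (smallest $t$, largest $i = k$) occurs for the $k$ largest degrees, i.e. $t = \mu - d_1 - \cdots - d_k$, and the requirement $t \geq \reg X - k$ becomes $\mu \geq d_1 + \cdots + d_k - k + \reg X$, which is dominated by the stated bound $\mu \geq d_1 + \cdots + d_{n+1} - (n+1) + \reg X$ since $d_{k+1}, \ldots, d_{n+1} \geq 1$. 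This gives the surjectivity, hence the existence of homogeneous $q_j$ of degree $\mu - d_j$ with $\sum f_j q_j = \phi$ on $X$; dehomogenizing produces the desired $Q_j$, and including the trivial solution when $\deg \Phi$ already exceeds the bound gives the $\max$.

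The main obstacle — or rather the point requiring care — is the passage from "membership/equality on $X$" back to an honest polynomial identity \eqref{hummer} on $V$: one must check that a section equality on $X \subset \P^N$ dehomogenizes correctly and that no spurious denominators or degree inflation appear, and that the reduction to $n+1$ of the $F_j$ is legitimate (a generic linear combination argument, using that the common zero set is empty so genericity is on a nonempty Zariski-open set). A secondary subtlety is pinning down the exact regularity bookkeeping — the constant $-(n+1)$ rather than $-n$ — which comes precisely from running the Koszul argument with $n+1$ generators on the $n$-dimensional $X$ and using the definition of $\reg X$ in the Castelnuovo–Mumford normalization adopted in Section~\ref{lotta}. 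I would double-check the indexing against that definition rather than trust the heuristic above.
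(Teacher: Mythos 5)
Your overall strategy --- homogenize, run the Koszul complex of the $f_j$ on $X$, and obtain surjectivity on global sections from cohomology vanishing controlled by $\reg X$ --- is essentially the argument the paper uses: Example~\ref{koszuligen} carries out exactly this vanishing computation (on $\P^N$, via a resolution of $\Ok/\J_X$, rather than on $X$ via the regularity of $\Ok_X$), and the residue current $R^f$ vanishes trivially since $Z^f=\emptyset$; it is also the ``standard argument'' alluded to in Remark~\ref{lock}. Two steps, however, need repair. First, the reduction to $n+1$ forms by ``a generic linear change among the $F_j$'' does not work as stated and should simply be dropped. Constant-coefficient combinations $G_i=\sum_j c_{ij}F_j$ all have degree $d_1$, which already degrades the bound to $(n+1)(d_1-1)+\reg X$; worse, the restriction of the $d_1$-homogenization of $G_i$ to $X_\infty$ involves only the leading forms of those $F_j$ with $\deg F_j=d_1$, so if these leading forms have a common zero on $X_\infty$ that is removed only by a lower-degree $F_j$ (e.g.\ $F_1=x^2+x$, $F_2=x^2+y$, $F_3=x^2+1$, $F_4=y+1$ in $\C^2$, with the point $[0:0:1]$ at infinity), then \emph{every} such combination vanishes there and the hypothesis is destroyed. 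A degree-preserving reduction $g_i=\sum_{j\ge i}P_{ij}f_j$ with $\deg P_{ij}=d_i-d_j$ can be made to work, but that requires a genuine stratification argument. The point is that no reduction is needed: running the syzygy chase with all $m$ sections, the chain of obstructions terminates automatically because $H^k(X,\cdot)=0$ for $k>n$, so at most $\min(m,n+1)$ of the $d_j$ ever enter.

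Second, the bookkeeping. The vanishing needed at cohomological degree $k$ is $H^k(X,\Lambda^{k+1}E(\mu))=0$, not $H^k(X,\Lambda^{k}E(\mu))=0$: from $0\to S_{k+1}\to\Lambda^{k+1}E\to S_k\to 0$ one trades $H^k(S_k(\mu))$ for $H^k(\Lambda^{k+1}E(\mu))$ and $H^{k+1}(S_{k+1}(\mu))$. Your regularity input is off by one in the opposite direction: with the paper's convention that $\reg X=\reg J_X$ (Section~\ref{lotta}), the structure sheaf $\Ok_X$ is $(\reg X-1)$-regular, so $H^i(X,\Ok(t)|_X)=0$ for $i\ge 1$ and $t\ge \reg X-1-i$, not $t\ge\reg X-i$. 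These two errors happen to cancel in your final formula, but as written the corrected requirement coming from your own intermediate steps would be $\mu\ge d_1+\cdots+d_{k+1}-(k+1)+\reg X+1$, one more than the theorem allows. With both indices fixed, the condition at step $k$ is $\mu\ge d_1+\cdots+d_{k+1}-(k+1)+\reg X$ for $k+1\le\min(m,n+1)$, which (using $d_j\ge 1$) is exactly the paper's condition \eqref{enda2} and yields the stated bound. The dehomogenization step is unproblematic.
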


If $X$ is
smooth, then $\reg X \leq (n+1)(\deg X-1)+1$; 
this is Mumford's bound, see, e.g.,  \cite[Example 1.8.48]{Laz}. 
If $X$ is Cohen-Macaulay in $\P^N$ (and $N$ is minimal) then $\reg
X\leq \deg X-(N-n)$, see, \cite[Corollary 4.15]{Eis2}. 
In particular, if $V=\C^n$ so that $X=\P^n$, then $\reg X=1$; thus we
get back Macaulay's theorem. For a discussion of bounds on $\reg X$
for a general $X$, see, e.g., \cite[Section~3]{BayMum}. 
\smallskip

Let $Z^f$ denote the common zero set of $f_1,\ldots, f_m$ in $X$. 
Moreover, let $X_\infty:=X\setminus V$. 
For smooth varieties we have the
following version of Max Noether's theorem.

\begin{thm}\label{maxsats} 
Let $V$ be an algebraic subvariety of $\C^N$ of dimension $n$ such that its closure
$X$ in $\P^N$ is smooth,
and let $F_1,\ldots, F_m$ be polynomials in $\C^N$ of degrees $d_1\geq
\ldots \geq d_m$. 
Assume that $m\leq n$, that
\begin{equation}\label{krax}
\codim (Z^f\cap V)\ge m, 
\end{equation} 
and that $Z^f$ has no irreducible component contained in
$X_\infty$. 
If $\Phi$ is a polynomial that belongs to the ideal $(F_j)$ in $V$,
then there is a representation \eqref{hummer} with 
\begin{equation}\label{allman}
\deg(F_jQ_j)\le  \max(\deg\Phi, d_1+\cdots + d_m-m + \reg X). 
\end{equation} 

If in addition $X$ is Cohen-Macaulay in $\P^N$ one can choose
$Q_j$ so that 
\begin{equation}\label{andra} 
\deg(F_jQ_j)\le  \deg\Phi. 
\end{equation} 
\end{thm}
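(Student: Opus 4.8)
The plan is to work with the Koszul-type residue framework from \cite{semester} (and \cite{A3, AG, EW1, EW2}) on the projective variety $X$, passing between polynomials on $\C^N$ and sections of line bundles on $X$ via the induced sections $f_j$ of $\Ok(d_j)|_X$. The membership hypothesis $\Phi \in (F_j)$ on $V$ means the induced section $\varphi$ of $\Ok(\deg\Phi)|_X$ lies in the ideal sheaf generated by the $f_j$ generically on $X$; the first step is to produce a global residue current $R$ associated with the section $f = (f_1, \ldots, f_m)$ of $\bigoplus \Ok(d_j)|_X$, built from the Koszul complex of $f$, whose annihilator computes membership: $\varphi R = 0$ is equivalent (via the duality principle, using \eqref{krax} which says $f$ defines a complete intersection locally along $V$, and the fact that no component of $Z^f$ lies in $X_\infty$) to $\varphi$ being in the ideal. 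Since $m \le n$ and $\codim(Z^f \cap V) \ge m$, the relevant part of the current is the Coleff--Herrera--type product $R^f$, of bidegree $(0,m)$, supported on $Z^f$.

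Next I would solve the division problem by successively solving $\dbar$-equations on $X$ twisted by a sufficiently positive power of the hyperplane bundle. Concretely, from $\varphi R^f = 0$ one obtains, by the standard inductive solution of $\dbar u = \varphi R^{f}_{\le k}$ using Serre vanishing / Castelnuovo--Mumford regularity estimates, holomorphic sections $q_j$ of $\Ok(\mu - d_j)|_X$ with $\sum f_j q_j = \varphi$ on $X$, provided $\mu$ is at least the regularity threshold. This is where $\reg X$ enters: the cohomology vanishing $H^k(X, \F(\mu)) = 0$ for the various sheaves $\F$ appearing in the Koszul/Buchsbaum--Rim complex of $f$ kicks in once $\mu - (d_1 + \cdots + d_m)$ exceeds something controlled by $\reg X$, giving the bound $\mu \le \max(\deg\Phi, d_1 + \cdots + d_m - m + \reg X)$ in \eqref{allman}. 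The sections $q_j$ then lift to polynomials $Q_j$ on $\C^N$ of degree $\mu - d_j$ with $\sum F_j Q_j = \Phi$ on $V$ (the homogenization/dehomogenization matching the $-m$ shift coming from the top exterior power in the Koszul complex). The hypothesis that $Z^f$ has no component in $X_\infty$ is exactly what guarantees the current $R^f$ has no contribution at infinity that would obstruct this lifting.

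For the Cohen--Macaulay refinement \eqref{andra}, the point is that when $X$ is arithmetically Cohen--Macaulay in $\P^N$ the coordinate ring has depth $n+1$, so the Koszul complex of $f_1, \ldots, f_m$ (with $m \le n$ and $f$ a complete intersection along $V$) remains exact far enough — effectively the complex has no higher cohomology in the range that matters — and the residue current $R^f$ is, up to the structure sheaf, as simple as in the affine smooth case. Then the regularity term in \eqref{allman} can be entirely absorbed: one shows directly that membership forces a representation with $\deg(F_j Q_j) \le \deg\Phi$, since the obstruction sheaves are already $0$-regular. This parallels exactly how, in Theorem \ref{macsats}, the Cohen--Macaulay bound $\reg X \le \deg X - (N-n)$ improves matters, but here the complete-intersection hypothesis \eqref{krax} lets us remove $\reg X$ altogether, recovering Max Noether's optimal bound as the special case $X = \P^n$.

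The main obstacle I expect is controlling the degrees at infinity: making precise that the current $R^f$ (or the solution $u$ of the $\dbar$-equation) extends across $X_\infty$ with the right degree, i.e., that dehomogenizing the global sections $q_j$ on $X$ to polynomials $Q_j$ on $\C^N$ does not force an increase in degree. This is where the hypothesis that $Z^f$ has no irreducible component in $X_\infty$ is essential and must be used carefully — it ensures the restriction of $R^f$ to a neighborhood of $X_\infty$ is either zero or does not meet the divisor $X_\infty$ in the wrong dimension, so that the cohomology computation on $X$ (rather than on $V$) gives the sharp bound. A secondary technical point is verifying that the Buchsbaum--Rim-type complex associated with the (possibly overdetermined, since we only assume $m \le n$ and a codimension condition, not $m = \codim$) tuple $f$ is the right resolution to feed into the regularity estimates; handling the case $m < \codim(Z^f \cap V)$ versus $m = \codim(Z^f \cap V)$ may require splitting into subcases or choosing a generic subtuple.
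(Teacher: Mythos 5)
Your proposal follows essentially the same route as the paper: annihilation of the homogenized $\phi$ by the Koszul residue current (duality on $V$, plus the dimension principle at infinity, since $\codim(Z^f\cap X_\infty)>m$ while the current has bidegree at most $(0,m)$), followed by division with degree bounds coming from cohomology vanishing governed by $\reg X$ via the tensor product with a resolution of $\Ok/\J_X$, and the Cohen--Macaulay case from that resolution having length $N-n$ so the vanishing is automatic for every twist. Your closing worries are non-issues in this setup: the plain Koszul complex suffices (no Buchsbaum--Rim complex or generic subtuples are needed, since only $\codim(Z^f\cap V)\ge m$ is used for duality on $V$), and the passage across $X_\infty$ is exactly the dimension-principle step above.
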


\begin{remark}\label{skippa} 
If $X$ is Cohen-Macaulay it suffices that $V$ is smooth to
obtain \eqref{andra}. 
\end{remark}

For $V=\C^n$ Theorem \ref{maxsats} appeared in
\cite[Theorem~1.2]{A3}.

\smallskip 

For a general $X$, in order to have a Max Noether theorem, we need
the common zero set of the $f_j$ not to intersect the singular locus
of $X$ too badly. 
To make this statement more precise we need to introduce what we call
the 
\emph{intrinsic BEF-varieties}  
$$
X^{n-1}\subset\cdots \subset  X^1,
$$
of $X\subset \P^N$. These are the sets 
where the mappings in a locally free resolution of $\Ok^{\P^N}/\J_X$
do not have optimal rank. They are intrinsically defined subvarieties of
$X$ that are contained in
$X^0:=X_\text{sing}$. The codimension of $X^\ell$ is at least
$\ell+1$, and if $X$ is locally Cohen-Macaulay $X^\ell$ is
empty for $\ell\geq 1$, see Sections ~\ref{rescurr} and ~\ref{struktur}.

\begin{thm}\label{max2} 
Let $V$ be an algebraic subvariety of $\C^N$ of dimension $n$, with closure $X$ in
$\P^N$, and let $F_j$ be as in Theorem ~\ref{maxsats}. 
Assume that $Z^f$ satisfies \eqref{krax}, that $Z^f$ has no irreducible component contained in
$X_\infty$, and moreover that 
\begin{equation}\label{kraxa}
\codim(Z^f\cap X^\ell)\geq m+ \ell +1, ~~~~~ \quad \ell \geq 0. 
\end{equation}
If $\Phi$ is a polynomial that belongs to the ideal $(F_j)$ in $V$,
then there is a representation \eqref{hummer} such that  
\eqref{allman} holds. 
If in addition $X$ is Cohen-Macaulay in $\P^N$, and $m\le n$, we can
choose $Q_j$ such that \eqref{andra} holds. 
\end{thm}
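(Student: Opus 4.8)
The plan is to follow the residue-theoretic framework from \cite{semester, A3, AG, EW1, EW2}, adapting the proof of Theorem~\ref{maxsats} to allow $Z^f$ to meet the singular locus of $X$ in a controlled way. First I would set up the twisted Koszul-type complex associated to $f=(f_1,\ldots,f_m)$ on $X$, tensored with a locally free resolution of $\Ok^{\P^N}/\J_X$, and form the associated residue current $R^f$ whose annihilator realizes the ideal sheaf generated by the $f_j$ on $X$. The condition $\codim(Z^f\cap V)\ge m$ together with $m\le n$ guarantees (as in Theorem~\ref{maxsats}) that on the regular part the current $R^f$ has the expected ``top'' component of Bochner--Martinelli type concentrated in codimension $m$; the new input \eqref{kraxa} is precisely what forces the contributions of $R^f$ supported on the BEF-varieties $X^\ell$ to have bidegree too high to obstruct division, so that $R^f$ still has the property that $\Phi R^f=0$ whenever $\Phi\in(F_j)$ on $V$.

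\smallskip

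Second, given such a $\Phi$, I would solve the sequence of $\dbar$-equations (the ``integral representation / Koszul homotopy'' step) on $X$ to produce holomorphic sections $q_j$ of $\Ok(\deg\Phi - d_j)|_X$, respectively $\Ok(D)|_X$ for a suitable twist $D$, solving $\sum f_j q_j = \phi$ on $X$, where the degree of the twist is dictated by the vanishing of the relevant cohomology groups $H^k(X,\Ok(\cdot))$; this is where $\reg X$ enters, via the characterization of regularity in terms of vanishing of $H^k(X,\Ok(D-k))$ for $D\ge \reg X$. Concretely, the bound \eqref{allman} should come out as $d_1+\cdots+d_m - m + \reg X$ exactly because one needs $m$ successive $\dbar$-solvability steps, each costing one unit, on top of $\sum d_j$ and the regularity. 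The passage from sections $q_j$ on $X$ to polynomials $Q_j$ on $\C^N$ with the same degree bound is the standard dehomogenization, using that $X$ is the projective closure of $V$ and that $Z^f$ has no component in $X_\infty$ (so nothing is lost at infinity).

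\smallskip

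Third, for the Cohen-Macaulay refinement giving \eqref{andra}: when $X$ is Cohen--Macaulay in $\P^N$ the BEF-varieties $X^\ell$ are empty for $\ell\ge 1$, so \eqref{kraxa} reduces to its $\ell=0$ instance, and more importantly the resolution of $\Ok^{\P^N}/\J_X$ can be taken to have length $N-n$, which makes the relevant residue current and the corresponding Koszul complex ``as short as possible.'' In that situation one has the sharper cohomological vanishing (Serre duality / the dualizing sheaf $\omega_X$ being a line bundle, or rather the Cohen--Macaulay property ensuring $H^k$ vanishes in the needed range without the regularity shift) that lets each $\dbar$-step be performed with no degree loss, yielding $\deg(F_jQ_j)\le\deg\Phi$. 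I would check that the hypothesis $m\le n$ is what is needed here so that the Koszul complex on the $f_j$ does not run past the depth of $X$.

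\smallskip

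The main obstacle I anticipate is the bookkeeping for the residue current near the singular locus: showing that \eqref{kraxa} genuinely kills all obstructing components of $R^f$ supported on $X^\ell$ requires a careful local analysis of how the current built from the resolution of $\J_X$ interacts with the Koszul complex of the $f_j$ — i.e., that the product $R^f$ has no components of bidegree $(0,k)$ with $k>m$ charging $Z^f\cap X^\ell$, which is exactly the codimension count $\codim(Z^f\cap X^\ell)\ge m+\ell+1$ rephrased. Establishing the duality principle ($\Phi\in(F_j)$ on $V$ $\iff$ $\Phi R^f=0$) in this singular, non-Cohen--Macaulay setting, with the correct global twist so the degree estimate \eqref{allman} comes out with the stated constant, is where the real work lies; the $\dbar$-solving and dehomogenization are then routine given the regularity input.
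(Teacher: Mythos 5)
Your overall strategy is the paper's: form the current $R^{f}\wedge\omega$ from the twisted Koszul complex of the $f_j$ together with the structure form coming from a Hermitian resolution of $\Ok^{\P^N}/\J_X$, show that it annihilates the homogenization $\phi$, and then obtain global holomorphic $q_j$ from the cohomology vanishing governed by $\reg X$ as in Example~\ref{koszuligen}; your treatment of the Cohen--Macaulay case (resolution of length $N-n$, product complex of length $\le N$, hence no degree loss when $m\le n$) is also the paper's. However, two steps are genuinely missing. First, you never dispose of the part of the current supported on $X_\infty$. Knowing $\Phi\in(F_j)$ on $V$ only yields $R^f\phi=0$ on $V_{\text{reg}}$, by duality for the complete intersection there; the hypothesis that $Z^f$ has no irreducible component in $X_\infty$ is not needed for dehomogenization, where you invoke it, but exactly here: combined with \eqref{krax} it gives $\codim(Z^f\cap X_\infty)>m$, so $\mathbf 1_{X_\infty\setminus X^0}R^f=0$ by the dimension principle, since the components of $R^f$ have bidegree at most $(0,m)$.

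Second, you propose to establish the full duality principle ``$\Phi\in(F_j)$ on $V$ iff $R^f\Phi=0$'' in the singular setting and correctly flag it as the hard point, but this is not the route: as noted in Section~\ref{divprob}, duality can fail on a singular variety, and the paper deliberately avoids it. Only the one-directional implication of Proposition~\ref{glatta} for the product complex is used, together with a direct proof that $R^f\wedge\omega\,\phi=0$. That proof is an induction on $\ell$ using \eqref{lingon}, $\omega_\ell=\alpha^\ell\omega_{\ell-1}$ with $\alpha^\ell$ smooth outside $X^\ell$: once $R^f\wedge\omega_{\ell-1}\phi=0$, the current $R^f\wedge\omega_\ell\phi$ is supported on $Z^f\cap X^\ell$, has bidegree at most $(n,m+\ell)$, and is killed by \eqref{kraxa} and the dimension principle. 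Making this induction explicit is the missing mechanism behind your assertion that the contributions on the $X^\ell$ ``have bidegree too high''; as written, your outline conflates the components of $R^f$ (bidegree at most $(0,m)$ for the Koszul complex) with those of the structure form $\omega$, which is where the index $\ell$ in \eqref{kraxa} actually enters.
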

Notice that \eqref{kraxa} forces that either $Z^f\cap X_{\text{sing}}=\emptyset$ or
$m<n$. If $X$ is smooth, then \eqref{kraxa} is vacuous, and thus
Theorem ~\ref{maxsats} follows immediately from Theorem ~\ref{max2}. 
If only $V$ is smooth but $X$ is Cohen-Macaulay, then by the assumption
on $Z^f$ $\codim(Z^f\cap X_\infty)\geq m+1$ and since $X^0\subset
X_\infty$, \eqref{kraxa} is satisfied. This proves the claim in Remark
~\ref{skippa}.

\smallskip

Next we will present some generalizations of Theorem ~\ref{max2} where
we relax the hypotheses on the common zero set $Z^f$ of the $f_j$. 
First, we drop the size hypothesis \eqref{krax} on $Z^f\cap V$. We
then 
still get an estimate of the form \eqref{allman} but the
second entry on the right hand side is now replaced by a constant that depends on $F_j$ in a
more involved manner. 
The condition that $Z^f$ has no irreducible
component 
at infinity should now be understood as that the ideal sheaf  $\J_f$ over $X$
generated by  the sections $f_1,\ldots, f_m$ has no associated
variety, in the sense of \cite{S}, contained in $X_\infty$, see
Section ~\ref{ass}. This means
that at each $x\in X_\infty$, $(\J_{f})_x$ has no (varieties of)
associated prime ideals contained in $X_\infty$. 
Let $J_f$ be the homogeneous ideal in $\C[z_0,\ldots, z_N]$ associated
with $\J_f$, and let $\reg J_f$ be the \emph{regularity} of $J_f$,
cf.\ Section ~\ref{lotta}.

\begin{thm}\label{noethersats}
Let $V$ be an algebraic subvariety of $\C^N$, with closure $X$ in
$\P^N$, and let $F_1,\ldots,F_m$ be polynomials in $\C^N$. 
Assume that $\J_f$ has no associated variety contained in $X_\infty$. 
Then there is a constant $\beta=\beta(X,F_1,\ldots,F_m)$ such that
if $\Phi\in (F_j)$,
then there is a representation \eqref{hummer} on $V$ with
\begin{equation}\label{singmax}
\deg(F_jQ_j)\le \max (\deg\Phi,   \beta).
\end{equation}
If $V=\C^N$, one can take $\beta= \reg J_f$. 

Conversely, if there is an associated prime of $\J_f$ contained in
$X_\infty$, then there is no $\beta$ such that one can solve
\eqref{hummer} with \eqref{singmax} for all $\Phi$ in $(F_j)$. 
\end{thm}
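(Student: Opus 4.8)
The plan is to work on the projective completion $X\subset\P^N$ with the homogeneous coordinate ring $S=\C[z_0,\dots,z_N]$, and to exploit the residue-current machinery from \cite{semester,A3} exactly as in the proof of Theorem \ref{max2}. First I would pass from the affine data $F_1,\dots,F_m$ to the homogenized sections $f_j$ of $\Ok_X(d_j)$ and consider the ideal sheaf $\J_f$ and its saturated homogeneous ideal $J_f\subset S$. The hypothesis that $\J_f$ has no associated variety contained in $X_\infty=X\setminus V$ is precisely what makes the affine membership $\Phi\in(F_j)$ on $V$ equivalent, after multiplying by a suitable power of a linear form cutting out $X_\infty$, to a membership statement for the homogenization $\Phi^h$ in $J_f$ in large enough degree: since no associated prime of $\J_f$ sits in $X_\infty$, the localization map does not lose any component, so $z_0^k\Phi^h\in (f_j)$ globally on $X$ once $k$ is large. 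This reduces the existence of \emph{some} bound $\beta$ to a Noetherian/coherence argument: the sheaf-level solutions $q_j$ of $\sum f_j q_j = \Phi^h$ twisted into high degree are global sections of a coherent sheaf whose higher cohomology vanishes for twist $\ge$ some $\beta(X,F_1,\dots,F_m)$, by Serre vanishing together with the Artin–Rees/stabilization of the relevant Ext or cohomology modules. This is where the constant $\beta$ comes from; it is not effective in general precisely because it is a Serre-type bound depending on $X$ and the $F_j$.

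For the sharper statement when $V=\C^N$, so $X=\P^N$ and $\J_f$ corresponds to a saturated homogeneous ideal $J_f\subset S$, I would invoke the definition of $\reg J_f$ from Section \ref{lotta}: by Castelnuovo–Mumford regularity, in every degree $\ge\reg J_f$ the ideal $J_f$ is generated by its elements of that degree and, more to the point, the module $S/J_f$ has vanishing relevant cohomology, so that if the homogenization $\Phi^h$ of $\Phi$ has degree $\le\deg\Phi$ and lies in $J_f$ in that degree — which it does, since $\J_f$ has no associated prime at infinity — one can solve $\sum f_jq_j=\Phi^h$ with $\deg(f_jq_j)\le\max(\deg\Phi,\reg J_f)$; dehomogenizing gives \eqref{singmax} with $\beta=\reg J_f$. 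Alternatively, one runs the residue-current proof of Theorem \ref{max2} verbatim, but with the integral closure/regularity input $\reg J_f$ in place of the numerical quantity $d_1+\cdots+d_m-m+\reg X$, which is available exactly because the associated-prime hypothesis guarantees the current representing $\Phi$ extends across $X_\infty$.

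For the converse, suppose $\p$ is an associated prime of $\J_f$ with $V(\p)\subset X_\infty$, and let $Y=V(\p)$. The idea is to produce, for every candidate $\beta$, a polynomial $\Phi\in(F_j)$ on $V$ that cannot be represented with the degree bound \eqref{singmax}: since $\p$ is associated, there is a section (germ) $g$ annihilated by $\p$ but not lying in the part of $\J_f$ coming from the other primary components; homogenizing and multiplying by a high power of a linear form $\ell$ vanishing on $Y$ but not identically on $X$, one gets polynomials $\Phi_\ell = \ell^{s}\Phi_0$ which are in $(F_j)$ on $V$ (because $Y\subset X_\infty$, so on $V$ the extra factor is invisible to membership) but whose any representation $\sum F_jQ_j=\Phi_\ell$ forces $\deg(F_jQ_j)$ to grow linearly in $s$: a bounded representation would, upon homogenizing, give $z_0^{\text{bounded}}\Phi_\ell^h\in(f_j)$ near a point of $Y$, contradicting that $g\notin$ the $\p$-primary part. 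Letting $s\to\infty$ defeats any fixed $\beta$. \textbf{The main obstacle} is this converse: one must turn the abstract statement ``$\p\in\ass\J_f$ with $V(\p)\subset X_\infty$'' into an explicit family of counterexamples with unbounded required degree, and keep careful track of the homogenization/dehomogenization so that membership on $V$ is preserved while membership ``at infinity'' is obstructed — the bookkeeping with powers of the linear form $\ell$ and the precise local description of the $\p$-primary component is the delicate part.
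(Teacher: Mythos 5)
Your treatment of the first part follows essentially the paper's route (in its algebraic incarnation, cf.\ Remark~\ref{lock}): the absence of associated varieties of $\J_f$ in $X_\infty$ promotes membership of the homogenization on $V$ to membership in every primary component of $(\J_f+\J_X)_x$ at points of $H_\infty$, and the uniform twist $\beta$ comes from cohomology vanishing along a free resolution of $\Ok^{\P^N}/(\J_f+\J_X)$; for $V=\C^N$ the resolution can be taken minimal and the twist is controlled by $\reg J_f$. One caveat: what the hypothesis buys is \emph{not} that $z_0^k\Phi^h\in(f_j)$ for $k$ large --- that holds unconditionally once $\Phi\in(F_j)$ on $V$, simply by homogenizing a representation --- but that the power $k=\rho-\deg\Phi$ with $\rho=\max(\deg\Phi,\beta)$ already suffices, i.e.\ that $\phi_x$ lies in the ideal at every $x\in H_\infty$ with no extra factor of $z_0$. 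That step needs the primary-decomposition/Nullstellensatz (or SEP) argument and is the crux of the first part; your phrase ``the localization map does not lose any component'' gestures at it but should be made precise.

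The converse is where your argument genuinely breaks. You propose $\Phi_s=L^s\Phi_0$ where the linear form $\ell$ (hence $L$) vanishes on $Y=V(\p)\subset X_\infty$. But $\ell\in\p=\sqrt{Q_{\p}}$, where $Q_{\p}$ is the $\p$-primary component of the ideal, so $\ell^{s}\in Q_{\p}$ for all large $s$; hence $\ell^s\hat g\in Q_{\p}$, and since $\hat g$ must lie in the remaining primary components anyway (you need this for $\Phi_0\in(F_j)$ on $V$ --- a germ with annihilator $\p$ is not enough), the homogenization of $\Phi_s$ lies in $\J_f$ for large $s$ and the obstruction you are counting on disappears. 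For the same reason the claimed contradiction ``$z_0^{k}\Phi_s^h\in(f_j)$ near $Y$ contradicts $g\notin Q_{\p}$'' fails: $z_0$ also lies in $\p$, so membership of $z_0^k\ell^s\hat g$ in $Q_{\p}$ says nothing about $\hat g$. The correct construction is the opposite one: take $\psi_0\in\J_f[X_\infty]\setminus\J_f$ and multiply by powers of a section $\psi$ that does \emph{not} vanish identically on any associated variety of $\J_f$ contained in $X_\infty$; primarity then guarantees $\psi_0\psi^\ell\notin\J_f$, so homogenizing any representation of $\Phi_\ell=\Psi_0\Psi^\ell$ forces an extra positive power of $z_0$, i.e.\ $\deg(F_jQ_j)\ge\deg\Phi_\ell+1$, while $\deg\Phi_\ell\to\infty$ defeats every fixed $\beta$. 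Note also that the right target is not ``degree growing linearly in $s$'' but simply exceeding $\deg\Phi_\ell$ for a family of unbounded degree.
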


In ~\cite{Sh} Shiffman computed the regularity of a zero-dimensional
homogeneous polynomial ideal $J_f$ to be $\leq d_1+\ldots + d_{n+1}-n$. Using this he obtained (the first part
of) Theorem ~\ref{noethersats} for $V=\C^N$ and $\dim Z^f=0$ with 
$\beta=\reg J_f=d_1+\cdots +d_{n+1} -n$, i.e., the same bound as in
Macaulay's theorem, see \cite[Theorem~2(iv)]{Sh}. 
Theorem ~\ref{noethersats} can thus 
be seen as a generalization of Shiffman's result.

The estimate \eqref{singmax} is clearly sharp if $\deg\Phi\ge\beta$.
If  the ideal sheaf $\J_f$ is locally Cohen-Macaulay, for instance
locally a complete intersection, 
then there are no embedded primes of $\J_f$, and so the hypothesis
that $\J_f$ has no associated variety at infinity just means that no irreducible component of
$Z^f$ is contained in $X_\infty$. Thus we get
back the hypothesis in Theorems ~\ref{maxsats} and ~\ref{max2}.

\smallskip 

Next, let us instead relax the condition that $Z^f$ has no  
irreducible components at infinity. 
If the degrees of $F_j$ are $\leq d$, we let
$\tilde f_j$ denote the section of $\Ok(d)|_X$ corresponding to $F_j$.
We let $Z^{\tilde f}$ be the common zero set of $\tilde f_1, \ldots,
\tilde f_m$ and $\J_{\tilde f}$ the coherent analytic sheaf over $X$ generated by 
the $\tilde f_j$. Moreover, we  
let $c_\infty$ be the maximal codimension of  the
so-called  {\it (Fulton-MacPherson) distinguished varieties} of
$\J_{\tilde f}$ 
that are contained in 
$X_\infty$, 
see Section~\ref{dist}.
If there  are no distinguished varieties contained in  $X_\infty$,
then we interpret $c_\infty$ as $-\infty$. Note that it is
not sufficient that $Z^{\tilde f}\cap V=Z^{\tilde f}$, since there may be embedded
distinguished varieties contained in $X_\infty$.  
It is well-known that the codimension of a distinguished variety
cannot exceed the number $m$, see, e.g., Proposition~2.6 in \cite{EL}, and thus
$c_\infty\le\mu$, where 
\begin{equation*}
\mu:=\min (m, n).
\end{equation*}

\begin{thm}\label{halvglattnoether}
Let $V$ be an algebraic subvariety of $\C^N$, 
with closure $X$ in $\P^N$, and let $F_1,\ldots, F_m$
be polynomials in $\C^N$ of degree $\le d$. 
Assume that $Z^{\tilde f}$ satisfies 
\begin{equation}\label{krax2}
\codim (Z^{\tilde f}\cap X)\geq m
\end{equation}
 and 
\begin{equation}\label{paxa}
\codim(Z^{\tilde f}\cap X^\ell)\geq m+ \ell +1, ~~~~~ \quad \ell \geq 0. 
\end{equation} 
If $\Phi$ is a polynomial that belongs to $(F_j)$ on $V$, then there
is a representation \eqref{hummer} on $V$ with
\begin{equation}\label{kyckling} 
\deg(F_jQ_j)\le\max(\deg\Phi+\mu d^{c_\infty}\deg X, (d-1)\min(m,n+1)+\reg X).
\end{equation}

If in addition $X$ is locally Cohen-Macaulay in $\P^N$ and $m\le n$, then we
can choose $Q_j$ such that 
\begin{equation*}
\deg(F_jQ_j)\le \deg\Phi+m d^{c_\infty}\deg X.
\end{equation*} 
\end{thm}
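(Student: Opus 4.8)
The plan is to follow the residue-current setup used for Theorems \ref{max2} and \ref{noethersats}, modifying it to account for the components of $Z^{\tilde f}$ sitting at infinity. The starting point is the observation that, since $\deg F_j\le d$, the sections $\tilde f_j$ of $\Ok(d)|_X$ generate an ideal sheaf $\J_{\tilde f}$, and by Fulton--MacPherson intersection theory the distinguished varieties of $\J_{\tilde f}$ contained in $X_\infty$ have codimension at most $c_\infty$. First I would construct, as in \cite{A3, AG, semester}, a Koszul-type residue current $R^{\tilde f}$ associated with $(\tilde f_1,\ldots,\tilde f_m)$ on $X$, twisted by a suitable power of $\Ok(1)$; the crucial property is that $R^{\tilde f}$ has support on $Z^{\tilde f}$ and, because of the codimension conditions \eqref{krax2} and \eqref{paxa} combined with the structure of the intrinsic BEF-varieties $X^\ell$, it is annihilated by the sheaf $\J_f$ away from $X_\infty$ and more precisely its components on $X_\infty$ are controlled by the distinguished varieties there. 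Then $\Phi\in(F_j)$ on $V$ forces $R^{\tilde f}\Phi$ to vanish on $V$, so $R^{\tilde f}\Phi$ is supported on $Z^{\tilde f}\cap X_\infty$.

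The second step is to run the standard division procedure: using the integral formula / Koszul homotopy on $\P^N$ (cf.\ Section \ref{rescurr}), the vanishing of the obstruction current $R^{\tilde f}\Phi$ on $V$ produces global holomorphic solutions $q_j$ of $\sum \tilde f_j q_j=\phi$ on a modification, hence polynomials $Q_j$ solving \eqref{hummer} on $V$, provided one can absorb the part of $R^{\tilde f}\Phi$ living at infinity. This is where the degree count enters: the solution $Q_j$ is obtained as a sum of a ``global'' piece whose degree is governed by $\reg X$ together with the twist $(d-1)\min(m,n+1)$ needed to make the Koszul complex of the $\tilde f_j$ exact in the relevant degrees, and a ``correction'' piece coming from the components at infinity, whose degree is governed by $\deg\Phi$ plus the intersection-theoretic bound $\mu\, d^{c_\infty}\deg X$ — the factor $d^{c_\infty}\deg X$ being a Bézout-type bound for the distinguished variety of maximal codimension $c_\infty$ at infinity, and $\mu=\min(m,n)$ accounting for the number of such contributions. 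Putting the two pieces together and taking the max gives \eqref{kyckling}.

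For the Cohen-Macaulay refinement, I would argue that when $X$ is locally Cohen-Macaulay all the higher BEF-varieties $X^\ell$, $\ell\ge1$, are empty, so condition \eqref{paxa} reduces to $\codim(Z^{\tilde f}\cap X^0)\ge m+1$, i.e.\ to $Z^{\tilde f}\cap X_{\text{sing}}$ being small enough; in this situation the Koszul complex of the $\tilde f_j$ is already a resolution in codimension $\le m$ and no extra twist by $(d-1)\min(m,n+1)+\reg X$ is needed. One can then take the current $R^{\tilde f}$ to be just the Coleff--Herrera product (when $m\le n$ and $\codim Z^{\tilde f}=m$) or its Bochner--Martinelli analogue, and the division formula produces $Q_j$ with $\deg(F_jQ_j)\le\deg\Phi$ away from infinity plus the correction $m\,d^{c_\infty}\deg X$ from the distinguished components at infinity, which is the asserted bound.

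The main obstacle I expect is precisely the bookkeeping at infinity: one must show that the obstruction current $R^{\tilde f}\Phi$, which a priori could have an uncontrolled embedded part on $X_\infty$, in fact only sees the distinguished varieties of $\J_{\tilde f}$ there, and that each such variety $W\subset X_\infty$ of codimension $\le c_\infty$ contributes a correction term whose degree is at most $\deg\Phi+(\text{Bézout bound for }W)$. This requires a careful local analysis near $X_\infty$ — identifying the relevant associated/distinguished primes, checking that the codimension hypotheses \eqref{krax2} and \eqref{paxa} guarantee that $\J_f$ annihilates the part of $R^{\tilde f}$ over $V$, and then using the Fulton--MacPherson bound $\deg W\le d^{\codim W}\deg X$ (with $\codim W\le c_\infty$) together with the fact that the number of distinguished components is effectively bounded by $\mu$. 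Once this localization-at-infinity lemma is in place, the rest is the by-now routine integral-formula argument combined with the estimates already used in the proofs of Theorems \ref{macsats}, \ref{maxsats}, \ref{max2} and \ref{noethersats}.
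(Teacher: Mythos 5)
Your overall strategy is the paper's: a Koszul residue current $R^{\tilde f}$ of the sections $\tilde f_j$ paired with the structure form $\omega$ of $X$, an analysis at infinity via the normalized blow-up along $\J_{\tilde f}$ and its distinguished varieties, and the Ein--Lazarsfeld estimate \eqref{elest}. But the step you yourself flag as the ``main obstacle'' is exactly the heart of the proof, and the sketch you give of it is not correct. The paper does \emph{not} split $Q_j$ into a ``global piece'' plus a ``correction piece at infinity''; it fixes a single twist $\rho$ equal to the right-hand side of \eqref{kyckling}, homogenizes $\Phi$ to $\phi=z_0^{\rho-\deg\Phi}\tilde\phi$, and proves the single vanishing $R^{\tilde f}\wedge\omega\,\phi=0$. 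The max appears because $\rho$ must simultaneously (i) satisfy the cohomological condition \eqref{enda2} so that Proposition~\ref{glatta} yields a \emph{global} solution, and (ii) be at least $\deg\Phi+\mu d^{c_\infty}\deg X$ so that $\nu^*\phi$ vanishes to order $\ge\mu r_j$ on every component $W_j$ of the exceptional divisor lying over $X_\infty$. Here your attribution of the two factors is off: $d^{c_\infty}\deg X$ is not a B\'ezout bound on $\deg W$ but a bound on the \emph{multiplicity} $r_j$ of $W_j$, obtained from $r_j\deg_L Z_j\le\deg_L X$, i.e.\ $r_j\le d^{\codim Z_j}\deg X\le d^{c_\infty}\deg X$; and $\mu$ is not ``the number of contributions'' but the top Koszul degree $k\le\mu=\min(m,n)$ for which $R^+_k=\dbar(1/(\tilde f^0)^k)\wedge\sigma'\wedge(\dbar\sigma')^{k-1}$ can be nonzero, so that the factor $s^{\mu r_j}$ in $\nu^*\phi$ annihilates $\dbar(1/s^{kr_j})$ for every relevant $k$. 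Without this precise accounting the bound \eqref{kyckling} does not follow.

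Two further points need repair. First, the obstruction current is not merely ``supported on $Z^{\tilde f}\cap X_\infty$'': after the duality principle on $V_{\mathrm{reg}}$ and the blow-up argument at infinity, $R^{\tilde f}\wedge\omega_0\phi$ is supported on $Z^{\tilde f}\cap X^0$, and one must then run the induction $\omega_\ell=\alpha^\ell\omega_{\ell-1}$ of \eqref{lingon} together with \eqref{paxa} and the dimension principle to kill $R^{\tilde f}\wedge\omega_\ell\phi$ for every $\ell$; your proposal never uses \eqref{paxa} in this way. Second, in the Cohen--Macaulay case the disappearance of the second entry of the max is a \emph{cohomological} fact --- the resolution of $\J_X$ ends at level $N-n$, so for $m\le n$ the tensor-product complex has length $\le N$ and \eqref{cohovillkor} holds for every $\rho$ by \eqref{pnko} --- not a consequence of the Koszul complex being a resolution, and one cannot replace $R^{\tilde f}\wedge\omega$ by a Coleff--Herrera product since $Z^{\tilde f}$ need not be a complete intersection. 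As written, the proposal identifies the right tools but leaves the decisive vanishing-order computation unproven and misstates the mechanism behind both occurrences of $d^{c_\infty}$.
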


Note that for most choices of $F_j$ and $\Phi$ the first entry in
\eqref{kyckling} is much larger than the second entry. 
For instance this is true for all $\Phi$ if $c_\infty \geq 2$ and $d$ is large
enough. In particular, if
$X=\P^n$, so that $\reg X=1$, and $c_\infty \geq 2$, the first entry 
is the largest for all $d$.

For $X=\P^n$ Theorem ~\ref{halvglattnoether} is due to the first author
and G\"otmark, \cite[Theorem~1.3]{AG}. 
In the case when $\deg F_j=d$, so that $\tilde f_j=f_j$, Theorem
~\ref{halvglattnoether} generalizes Theorems ~\ref{macsats}
--~\ref{max2}, see Remark ~\ref{gron}.

\begin{ex}\label{jelo} 
If the $F_j$ have no common zeros on $V$, then
Theorem~\ref{halvglattnoether} gives a solution to 
\begin{equation*}
F_1Q_1+\cdots + F_mQ_m=1
\end{equation*}
with 
$\deg (F_j Q_j)\leq \mu d^{\mu}\deg X$ if $d$ is large
enough. 
Except for the annoying factor $\mu$ we then get back 
is Jelonek's optimal effective Nullstellensatz, \cite{Jel}. 
\end{ex}

\smallskip

\smallskip 

Note that the estimates of $\deg (F_jQ_j)$ in the theorems above hold for
representations of \emph{all} $\Phi$ in $(F_j)$. 
If one, instead of adding conditions on $V$ and $F_j$, imposes further
conditions on $\Phi$, then Hermann's degree estimate
for solutions to \eqref{hummer} can also be essentially improved.  
Theorem ~1.1 in our recent paper \cite{semester} asserts that for any
$V\subset \C^N$ there is a number $\mu_0$ such that if $F_1,\ldots, F_m$ are
polynomials in $\C^N$ of degree $\leq d$ and $\Phi$ is a polynomial such that 
$|\Phi|\leq C |F|^{\mu + \mu_0}$ locally on $V$, 
where $|F|^2=|F_1|^2+\cdots + |F_m|^2$, then one can solve
\eqref{hummer} with 
\begin{equation}\label{nunu}
\deg (F_j Q_j)\leq 
\max \big(\deg \Phi + (\mu+\mu_0) d^{c_\infty} \deg X, (d-1)\min(m,n+1) +\reg X\big).
\end{equation}
The statement that $|\Phi|\leq C
|F|^{\mu + \mu_0}$ implies that there is a representation
\eqref{hummer} is a direct consequence of Huneke's uniform Brian\c
con-Skoda theorem, \cite{BS, Hun}, and thus the degree estimate
\eqref{nunu} can be seen as
a global effective Brian\c con-Skoda-Huneke theorem.

\medskip 
\noindent 
\textbf{Acknowledgment.}
We thank Richard L\"ark\"ang for helpful discussions.

\section{Residue currents}\label{prelim}

We will briefly recall some residue theory. 
For more details we refer to \cite{semester} and the references
therein.

\subsection{Currents on a singular variety}\label{currents} 

If nothing else is mentioned $X$ will be a reduced subvariety of
$\P^N$ of pure dimension $n$.
The sheaf
$\Cu_{\ell,k}$ of  currents of bidegree $(\ell,k)$ on $X$ is by definition the dual
of the sheaf  $\E_{n-\ell,n-k}$ of smooth  $(n-\ell,n-k)$-forms on $X$.
If $i\colon X\to \P^N$ is an embedding of $X$, then
$\E_{n-\ell,n-k}$ can be identified with the quotient sheaf
$\E_{n-\ell,n-k}^{\P^N}/\Ker i^*$, where $\Ker i^*$ is the sheaf of forms $\xi$ on $\P^N$ such that
$i^*\xi$ vanish on  $X_{\text{reg}}$. It follows that the currents $\tau$ in
$\Cu_{\ell,k}$ can be identified with
currents $\tau'=i_*\tau$ on $\P^N$ of bidegree $(N-n+\ell,N-n+k)$  that
vanish on $\Ker i^*$.

Given a holomorphic function $f$ on $X$, we write 
 $1/f$ for the \emph{principal value
  distribution}, defined for instance as  
$\lim_{\epsilon\to 0}\chi(|f|^2/\epsilon)(1/f),$ 
where $\chi(t)$ is the characteristic function of the
interval $[1,\infty)$ or a smooth approximand of it, or as the
  analytic continuation of $\lambda\to|f|^{2\lambda}(1/f)$ to $\lambda=0$. 
It is readily checked that $f(1/f)=1$ as distributions and that the
\emph{residue current} $\dbar(1/f)$ satisfies $f \dbar(1/f)=0$.
We will need the fact that 
\begin{equation}\label{nkel}
v^\lambda|f|^{2\lambda}\left.\frac{1}{f}\right |_{\lambda=0} = \frac{1}{f}
\end{equation}
if $v$ is a strictly positive smooth function; cf.\ \cite[Lemma~2.1]{A2}.

\subsection{Pseudomeromorphic currents}\label{pseudo} 

The notion of pseudomeromorphic currents on manifolds was introduced in \cite{AW2}. A slightly extended version appeared in
\cite{AS}:  
A current on $X$ is \emph{pseudomeromorphic} if it is (the sum of terms
that are) the
pushforward under (a composition of) modifications, projections, and 
open inclusions of 
currents of the form 
$$
\frac{\xi}{s_1^{\alpha_1}\cdots s_{n-1}^{\alpha_{n-1}}}\w\dbar\frac{1}{s_n^{\alpha_n}},
$$
where $s$ is a local coordinate system and $\xi$ is a smooth form with
compact support, see, e.g., \cite{AS} for details.

Pseudomeromorphic currents in many respects behave like positive
closed currents. For example they satisfy the 
\noindent {\it dimension principle:  If $\tau$ is a pseudomeromorphic current
on $X$ of bidegree $(*,p)$ that has support on a variety of codimension
$>p$, then $\tau=0$.}

Also, pseudomeromorphic currents allow for multiplication with
characteristic functions of constructible sets so that ordinary
computational rules hold. 
If $\tau$ is a pseudomeromorphic current on $X$
and $V$ is a subvariety of $X$, 
then the natural restriction
of $\tau$ to the open set $X\setminus V$ has a canonical extension
$\mathbf 1_{X\setminus V}\tau:= |h|^{2\lambda}\tau|_{\lambda=0}$, where $h$ is any
holomorphic tuple such that $\{h=0\}=V$. It follows that $\mathbf 1_{V}\tau:=
\tau-\mathbf 1_{X\setminus V}\tau$ is a pseudomeromorphic current with support on $V$. 
Note that if $\alpha$ is a smooth form, then
${\bf 1}_{V}\alpha\w\tau=\alpha\w{\bf 1}_{V}\tau$ and if $W$ are $W'$
are constructible sets, then 
\begin{equation}\label{dubbelstjarna}
{\bf 1}_W{\bf 1}_{W'}\tau={\bf 1}_{W\cap W'}\tau.
\end{equation}
Moreover, if $\pi\colon \widetilde X\to X$ is a modification,  $\tilde\tau$ is a
pseudomeromorphic current on $\widetilde X$,
and
$\tau=\pi_*\tilde\tau$, then
\begin{equation}\label{enkelstjarna}
{\bf 1}_V\tau=\pi_*\big({\bf 1}_{\pi^{-1}V}\tilde\tau\big)
\end{equation}
for any subvariety $V\subset X$. 
If $W$ is a subvariety of $X$ and ${\bf 1}_V \tau=0$ for all subvarieties $V\subset W$ of positive
codimension we say that $\tau$ has the 
{\it the standard extension
property}, SEP with respect to $W$, see \cite{Bj}.

Recall that a current is {\it semi-meromorphic}  if it is the quotient of a smooth
form and a holomorphic function. 
Following \cite{AS} we say that a current $\tau$ is {\it almost semi-meromorphic}
in $X$ if there is a modification
$\pi\colon \widetilde X\to X$ and a  semi-meromorphic current
$\tilde \tau$  such that  $\tau=\pi_*\tilde\tau$.

\subsection{Residue currents associated with Hermitian complexes}\label{rescurr} 

Consider a complex of Hermitian holomorphic vector bundles over a
complex manifold ~$Y$ of dimension $n$, 
\begin{equation}\label{ecomplex}
0\to E_M\stackrel{f^M}{\longrightarrow}\ldots\stackrel{f^3}{\longrightarrow}
E_2\stackrel{f^2}{\longrightarrow}
E_1\stackrel{f^1}{\longrightarrow}E_0\to 0, 
\end{equation}
that is pointwise exact outside an analytic variety ~$Z\subset Y$ of positive
codimension $p$. Suppose that the rank of ~$E_{0}$ is $1$. 
In \cite{A, AW1} was associated to \eqref{ecomplex} a $\bigoplus \Hom(E_0,E_k)$-valued pseudomeromorphic current $R=R^f$; it has support on $Z$ and in a certain sense it measures the
lack of exactness of the associated sheaf complex of holomorphic sections 
\begin{equation}\label{sheaves}
0\to \mathcal{O}(E_M)\stackrel{f^M}{\longrightarrow}\ldots\stackrel{f^3}{\longrightarrow}
\mathcal{O}(E_2)\stackrel{f^2}{\longrightarrow}
\mathcal{O}(E_1)\stackrel{f^1}{\longrightarrow}\mathcal{O}(E_0).
\end{equation}

\begin{prop}\label{glatta}
If $\phi$ is a holomorphic section of $E_0$ such that $R\phi=0$, then
$\phi\in\Im f^1$. Moreover, if 
\begin{equation}\label{cohovillkor}
H^{k-1}(Y,\Ok(E_k))=0, \quad  1\le k\le\min(M,n+1), 
\end{equation}
then there is a global holomorphic section $q$ of $E_1$ such that  $f^1q=\phi$.  
\end{prop}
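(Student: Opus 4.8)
The plan is to split Proposition~\ref{glatta} into its two assertions, the first being a local-to-global statement about the sheaf complex and the second a Čech-type vanishing argument.

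First I would prove that $R\phi=0$ forces $\phi\in\Im f^1$ \emph{as a sheaf morphism}, i.e.\ locally. The key input here is the fundamental property of the current $R=R^f$ established in \cite{A,AW1}: it comes together with an associated current $U$ (a $\bigoplus\Hom(E_0,E_{k+1})$-valued almost semi-meromorphic current) satisfying the Koszul-type identity $f U = \1 - R$ (with the appropriate interpretation of $f$ acting on the various components), together with the comparison formula $\nabla U = \1-R$ where $\nabla = f - \dbar$. Given a holomorphic $\phi$ with $R\phi=0$, one applies this identity to $\phi$: since $\phi$ is holomorphic, $\dbar\phi=0$, and working degree by degree one gets $f^1(U_1\phi) = \phi$ away from $Z$, and then checks using pseudomeromorphicity and the dimension principle (stated in Section~\ref{pseudo}) that the obstruction terms supported on $Z$ vanish because $R\phi=0$. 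This is exactly the mechanism by which $R$ ``measures the lack of exactness'' of \eqref{sheaves}, so morally this half is a citation to \cite{A,AW1}, perhaps with a sentence recalling the argument. I expect this to be routine given the cited machinery.

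The genuinely new content is the passage from a local solution to a \emph{global} holomorphic section $q$ of $E_1$ under the cohomology hypothesis \eqref{cohovillkor}. Here I would argue as follows. By the first part we have an open cover $\{U_\alpha\}$ of $Y$ and holomorphic sections $q^{(1)}_\alpha\in\Ok(E_1)(U_\alpha)$ with $f^1 q^{(1)}_\alpha = \phi$ on $U_\alpha$. On overlaps, $q^{(1)}_\alpha - q^{(1)}_\beta$ is a holomorphic section of $\Ker f^1$. The exactness of \eqref{ecomplex} outside $Z$, upgraded to exactness of the \emph{sheaf} complex \eqref{sheaves} away from $Z$ and — using that $R$ also detects exactness at the higher levels together with the analogous local-solvability statements at each stage — one gets that $\Ker f^1 = \Im f^2$ as sheaves, so the cocycle $\{q^{(1)}_\alpha - q^{(1)}_\beta\}$ lifts locally through $f^2$. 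One then runs the standard diagram chase: the obstruction to gluing the $q^{(1)}_\alpha$ into a global section lives successively in $H^1(Y,\Ok(E_2))$, then (after adjusting) in $H^2(Y,\Ok(E_3))$, and so on, each class being killed by \eqref{cohovillkor}. The range $1\le k\le \min(M,n+1)$ is exactly what is needed: the complex has length $M$ so there is nothing below $E_M$, and on a manifold (or variety) of dimension $n$ all cohomology above degree $n$ of a coherent sheaf vanishes, so the chase terminates.

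The main obstacle — and the step I would spend the most care on — is making precise the claim that \eqref{sheaves} is exact as a complex of sheaves \emph{off} $Z$ and that at each level the relevant kernels are locally in the image, since this is what feeds the inductive lifting. One has to be careful that ``pointwise exact outside $Z$'' for the bundle complex \eqref{ecomplex} gives exactness of the sheaf complex there (this is immediate for locally free sheaves), and then that the currents $R,U$ provide local holomorphic solutions not merely at level $0$ but wherever needed; this is again in \cite{A,AW1} but should be invoked explicitly. Once that is in hand, the cohomological descent is a formal spectral-sequence / hypercohomology argument — equivalently, one notes that \eqref{sheaves} together with $\phi$ gives a resolution of a coherent sheaf supported appropriately and that the hypercohomology vanishing in the stated range yields the global section. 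I would present it as the elementary Čech chase rather than invoking spectral sequences, for readability.
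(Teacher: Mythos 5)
The paper itself gives no proof of Proposition~\ref{glatta}; it is recalled from \cite{A,AW1}, so your proposal must be measured against the argument there. Your first half is essentially correct and matches that argument: the current $U$ with $\nabla_f U=\1-R$ (where $\nabla_f=f-\dbar$) is exactly the device that yields $\phi\in\Im f^1$ locally when $R\phi=0$, the obstruction on $Z$ being killed by pseudomeromorphicity and the dimension principle.

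The second half has a genuine gap. Your \v{C}ech chase needs $\Ker f^1=\Im f^2$ (and the analogous exactness at higher levels) in order to lift the cocycle $q^{(1)}_\alpha-q^{(1)}_\beta$ through $f^2$. But the proposition only assumes that the \emph{bundle} complex \eqref{ecomplex} is pointwise exact outside $Z$; it does not assume that the sheaf complex \eqref{sheaves} is exact, and in the paper's main applications it is not (the Koszul complex of $f_1,\dots,f_m$ has exact sheaf complex only when $f$ is a complete intersection, cf.\ Example~\ref{koszulex}). An arbitrary holomorphic section of $\Ker f^1$ need then not lie in $\Im f^2$, so the lifting fails at the very first step, and ``$R$ detects exactness at the higher levels'' is precisely what is false here. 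The argument of \cite{A,AW1} avoids gluing altogether: since $U$ is a \emph{globally defined} current, $u=U\phi$ is a global current solution of $\nabla_f u=\phi$, i.e.\ $f^1u_1=\phi$ and $f^{k+1}u_{k+1}=\dbar u_k$ for $k\ge 1$ (with $u_k=0$ for $k>\min(M,n+1)$ for degree reasons). One then solves a descending sequence of global $\dbar$-equations for currents, $\dbar w_k=u_k-f^{k+1}w_{k+1}$, each solvable by the Dolbeault interpretation of \eqref{cohovillkor}, and $q:=u_1-f^2w_2$ is the desired global holomorphic solution. This is where the hypothesis $H^{k-1}(Y,\Ok(E_k))=0$ actually enters, rather than as the vanishing of \v{C}ech obstruction classes in your chase.
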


We also have the \emph{duality principle: If \eqref{sheaves} is exact,
i.e., if it is a locally free resolution of the sheaf $\Ok(E_{0})/\Im
f^1$, then $R\phi =0$ if and only if $\phi\in\Im
f^1$.}

As in \cite{semester} we will refer to a (locally) free resolution
\eqref{sheaves} of
$\Ok(E_0)/\J$ together with Hermitian metrics on the corresponding
vector bundles as a \emph{Hermitian (locally) free resolution}.

\smallskip

Let us look at the construction of $R$ in a special case; see, e.g.,
\cite{semester} for more details and the general case. Let ~$R_k$ denote the component of ~$R$ that takes values in
$\Hom(E_0, E_{k})$.  

\begin{ex}[The Koszul complex]\label{koszulex}

Given Hermitian line bundles $S\to Y$ and $L_1,\ldots, L_m\to Y$ and a tuple $f$ of holomorphic sections $f_1,\ldots,
f_m$ of $L_1,\ldots, L_m$, respectively, let \eqref{ecomplex} be the
(twisted) Koszul
complex of $f$: 
Let $E^j$ be disjoint trivial line bundles with basis
elements $e_j$, let $E=L^{-1}_1\otimes E^1 \oplus\cdots \oplus
L_m^{-1}\otimes E^m$, and 
identify $f$ with a section $f=\sum f_j e^*_j$ of $E^*$, where
$e_j^*$ are the dual basis elements. 
Moreover, let 
$$
E_0=S, \quad     E_k=S\otimes \Lambda^k E, 
$$
and let all $f^k$ in \eqref{ecomplex} be interior multiplication
$\delta_f$ by the section $f$.

The current associated with the Koszul complex was introduced in
\cite{A2}; we will briefly recall the construction. 
Let $\sigma$ be  the section of $E$ over $Y\setminus Z$
with pointwise minimal norm such that $f\cdot\sigma=\delta_f\sigma=1$, i.e.,
$$
\sigma = \sum_j \frac {f_j^* e_j}{|f|^2},
$$
where $f_j^*$ is  the section of $L_j^{-1}$ of minimal norm such that
$f_jf_j^*=|f_j|^2_{L_j}$, and $|f|^2=|f_1|_{L_1}^2+\cdots +|f_m|_{L_m}^2$. 
Then $R_{k}$ equals the analytic continuation to $\lambda=0$ of 
\begin{equation}\label{mung}
R^\lambda=R^{f,\lambda}:=\dbar|f|^{2\lambda} \w \sigma\w (\dbar \sigma)^{k-1}.
\end{equation} 
Here the exterior product is with respect to the exterior algebra over $E\oplus T^*(Y)$ so that
$d \bar z_j\w e_\ell =-e_\ell\w d\bar z_j$ etc; in particular,  
$\dbar\sigma$ is a form of even degree.

If $m=1$, then $\sigma$ is just $(1/f_1) e_1$
and $R=\dbar(1/f_1) \w e_1$. 
In general, the coefficients of $R$ are the Bochner-Martinelli
residue currents introduced by Passare-Tsikh-Yger \cite{PTY}. 
The sheaf complex  associated with the Koszul
complex is exact
if and only if $f$ is a \emph{complete
  intersection}, i.e.,  $\codim
Z^f=m$. In this case one can prove that (the coefficient of) $R=R_m$ coincides with the classical \emph{Coleff-Herrera residue current} 
$\dbar(1/f_1)\w\cdots\w\dbar
(1/f_m)$.

\end{ex}

Since, in light of the above example, $R$ generalizes the classical
Coleff-Herrera residue current (as well as the Bochner-Martinelli
residue currents), we say that $R$ is the \emph{residue current}
associated with the Hermitian complex \eqref{ecomplex}.

The construction of $R$ in general involves the minimal inverse
$\sigma_k$ of each $f^k$ in \eqref{ecomplex}; $R$ is defined as the
analytic continuation to $\lambda=0$ of a 
regularization $R^\lambda$ which generalizes \eqref{mung}. The component $R_k$
is of the form $\dbar|f|^{2\lambda}\w\sigma_k \dbar
\sigma_{k-1}\cdots\dbar \sigma_{1}|_{\lambda=0}$; see, e.g., \cite{AW1} for a 
precise interpretation of this. 
It follows that outside the set $Z_{k}$
where $f^{k}$ does not have optimal rank, 
\begin{equation}\label{plommon}
R_{k}=\alpha_{k} R_{k-1}, 
\end{equation} 
where $\alpha_{k}$ is a smooth $\Hom(E_{k-1},
E_{k})$-valued $(0,1)$-form. 
If \eqref{sheaves} is exact,
these sets are independent of the resolution; we call them  {\it BEF
  varieties} (which is an acronym for Buchsbaum-Eisenbud-Fitting,
cf.\ \cite{semester}) and denote them $Z_k^{\BEF}=Z_k^\BEF(\J_f)$. The Buchsbaum-Eisenbud theorem asserts  that
$\codim Z_k^\BEF\ge k$; more precisely it says that the complex
\eqref{sheaves} is exact if and only if the codimension of the set
where $f_k$ does not have optimal rank is $\geq k$, see, e.g.,
\cite[Theorem~3.3]{Eis2}. 
If $\J_f$ has pure codimension $p$, then
$\codim Z_k^\BEF\geq k+1$ for $k> p$, see
\cite[Corollary~20.14]{Eis}. Also, note that if in addition $X$ is
locally Cohen-Macaulay, then  $Z_k=\emptyset$ for $k>p$. 
The current $R_k$ has bidegree $(0,k)$, and
thus, by the dimension
principle, $R_k=0$ for $k<p$, and for degree reasons, $R_k = 0$ for $k>n$.

If the complex \eqref{ecomplex} is twisted by a Hermitian
line bundle, the residue current $R$ is not affected. This follows
since the $\sigma_k$ are not affected by the
twisting.

\subsection{BEF-varieties on singular varieties}\label{singbef}

Let $i: X\to Y$ be a (local) embedding of $X$ of dimension $n$ into a
smooth manifold $Y$ of dimension $N$. 
Note that if $\J_f$ is a coherent ideal sheaf on $X$, then
$\J_f+\J_X$ is a well-defined sheaf on $Y$. Indeed, locally $\J_f$ is
the pullback $i^*\widetilde \J_f$ of an ideal sheaf on $Y$ and the
sheaf $\widetilde \J_f+\J_X$ is independent of the choice of 
$\widetilde \J_f$. 
We define \emph{$k$th BEF-variety} $Z_k^\BEF(\J_f)$ of $\J_f$ as 
$Z_{k+N-n}^\BEF({\J_f + \J_X})$, which clearly is a subvariety of $X$.

This definition is independent of the embedding
$i$. To see this recall that (locally) $i$ can be factorized as 
$X\stackrel{\iota}{\to}\Omega\stackrel{}{\to}\Omega\times \C^r=Y$,
where $\iota$ is a minimal embedding.  
From a locally free
resolution of $\Ok^\Omega/\J$, where $\J$ is a coherent ideal sheaf
over $\Omega$, it is not hard to construct a locally free resolution
of $\Ok^Y/(\J+\J_\Omega)$. By relating the sets where the mappings in these
resolutions do not have have optimal rank one can show that the BEF-varieties
of $\J$ are independent of $i$, cf. \cite[Remark~4.6]{A12} and
\cite[Section~3]{semester}.

\subsection{The structure form $\omega$ on a singular
  variety}\label{struktur}

Now assume that $X$ is as in Section ~\ref{currents}, and let $R$ be the
residue current associated with a Hermitian free resolution $\Ok(E_\bullet),
g^\bullet$ of the sheaf $\J_X$ of $X$, and let $\varOmega$ be a global nonvanishing $(\dim
\P^N, 0)$-form with values in $\Ok(N+1)$. 
It was shown in \cite[Proposition~3.3]{AS} that there is a (unique) almost
semi-meromorphic current $\omega=\omega_0+\cdots +\omega_{n-1}$ on
$X$, that is smooth on $X_{\text{reg}}$ and such that 
\begin{equation*}\label{poseidon} 
i_*\omega= R\w\varOmega.
\end{equation*}
We say that $\omega$ is a {\it structure form} on $X$. Let
$E^\ell$ denote the restriction of $E_{N-n+\ell}$ to $X$. Then the
component $\omega_\ell$ is an $(n,\ell)$-form taking values in
$\Hom(E^0,E^\ell)$. Moreover, let $X^0=X_\text{sing}$ and
$X^\ell=X_{N-n+\ell}$, where $X_j$ are the BEF-varieties of
$\J_X$. In the language of the previous section $X^\ell$ is the $\ell$th
BEF-variety of the zero sheaf. It follows from that section that 
the $X^\ell$ are independent of the embedding $i:X\to Y$ of $X$
into a smooth manifold $Y$; we therefore call them the
\emph{intrinsic BEF-varieties of $X$}. In light of \eqref{plommon} there are
almost semi-meromorphic forms $\alpha^\ell$, smooth outside $X^\ell$, such that 
\begin{equation}\label{lingon}
\omega_\ell=\alpha^\ell \omega_{\ell-1}.
\end{equation}
on $X$.

\section{Gap sheaves and primary decomposition of sheaves}\label{ass}

Recall that any ideal $\a$ in a Noetherian ring $A$ admits a
\emph{primary decomposition} (or \emph{Noether-Lasker decomposition}), i.e., it can be written as $\a=\bigcap
\a_k$, where $\a_k$ is $\p_k$-primary ($ab\in \a_k$ implies $a\in \a_k$ or
$b^s\in \a_k$ for some $s$ and $\sqrt \a_k=\p_k$) for some prime ideal $\p_k$. The
 primes in a minimal such decomposition are called the \emph{associated
  primes} of $\a$ and the set $\ass (\a)$ of associated primes is independent of
 the primary decomposition.  

\smallskip 

Given a coherent subsheaf $\J$ of $\Ok^X$, Siu \cite{S} gave a way of defining
a ``global'' primary decomposition. 
Let us briefly recall his construction. 
First, for $p=0
,1,\ldots, \dim X$, let $\J_{[p]}\supset \J$ be the $p$th
\emph{gap sheaf (L\"uckergarbe)}, introduced by Thimm
\cite{T}:  A germ $s\in \Ok_x$ is in  $(\J_{[p]})_x$ if and only if there is
a neighborhood $U$ of $x$ and a section $t\in \J(U)$ such that
$s_x=t_x$ and $t_y\in \J_y$ for all $y\in U$ outside an analytic set of
dimension at most $p$. 
It is not hard to see that $\J_{[p]}$ is a coherent sheaf, see
\cite{T}, and that  the set $Y^p$ where
$(\J_{[p]})_x\neq \J_x$ is an analytic variety of dimension at most
$p$, see  \cite[Theorem~3]{S}. 
The irreducible components of $Y^p$, $p=0,1,\ldots, \dim X$, are
called the \emph{associated
  (sub)varieties} of $\J$. 
A coherent sheaf $\J$ is said to be \emph{primary}
if it has only one associated variety $Y$; we then say that $\J$
is \emph{$Y$-primary}. 
Theorem ~6 in \cite{S} asserts 
that each coherent $\J\subset \Ok^X$ admits a decomposition 
\begin{equation}\label{noetherlasker}
\J =\bigcap \J_i, 
\end{equation} 
where there is one $Y_i$-primary intersectand 
$\J_i$ for each associated variety $Y_i$ of $\J$. 
For a radical sheaf $\J_X$, the decomposition
\eqref{noetherlasker} corresponds to decomposing $X$ into irreducible
components.

By Theorem ~4 in \cite{S} if $Y$ is an associated prime variety of
$\J$, then at $x\in X$ the irreducible components $\ass (\J_{Y_x})$ of $Y_x$ are
germs of varieties of associated primes of $\J_x$. Furthermore, if
$Y_x$ is (the variety of) an associated prime of $\J_x$, then $Y_x$ is
contained in $Y^p_x$ for $p\geq \dim Y_x$. For fixed $x$ we get that 
\begin{equation*}
\bigcup_{Y\in \ass(\J), Y\ni x}\ass (\J_{Y_x})
\end{equation*}
is a disjoint union of $\ass (\J_x)$. Thus we have
\begin{lma}\label{sauna}
The germ at $x$ of $\J_{[p]}$ is precisely the intersection of the
primary components of $\J_x$ that are of dimension $>p$. 
\end{lma}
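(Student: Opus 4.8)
The statement is local, so the plan is to work entirely in the Noetherian local ring $\Ok_x$. I would fix an irredundant primary decomposition $\J_x=\bigcap_i\q_i$ with each $\q_i$ being $\p_i$-primary, write $d_i:=\dim V(\p_i)$, and set $\mathfrak b:=\bigcap_{d_i>p}\q_i$; note $\mathfrak b$ is independent of the chosen decomposition, because $\{i:d_i>p\}$ is stable under replacing a prime by a smaller one, but this will also fall out of the proof. The goal is then the two inclusions $\mathfrak b\subseteq(\J_{[p]})_x$ and $(\J_{[p]})_x\subseteq\mathfrak b$.

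For the inclusion $\mathfrak b\subseteq(\J_{[p]})_x$ I would spread $\mathfrak b$ out to a sheaf: pick holomorphic representatives near $x$ of a finite generating set of $\mathfrak b$ and let $\mathfrak B$ be the coherent ideal sheaf they generate on a small neighborhood $U$, so $\mathfrak B_x=\mathfrak b\supseteq\J_x$. Since the image of $\J$ in $\Ok^X/\mathfrak B$ is a coherent sheaf vanishing at $x$, after shrinking $U$ we get $\J|_U\subseteq\mathfrak B|_U$; then $\mathfrak B/\J$ is a coherent subsheaf of $\Ok^X/\J$, so its support $A$ is an analytic subset of $U$. A short computation in $\Ok_x$ — using that $\mathfrak b\subseteq\q_i$ when $d_i>p$ while $\mathfrak b\not\subseteq\p_i$ when $d_i\le p$ — shows $(\J_x:\mathfrak b)=\bigcap_{d_i\le p}\q_i$, hence near $x$ the set $A$ equals $\bigcup_{d_i\le p}V(\p_i)$, which has dimension $\le p$. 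Thus $\mathfrak B_y=\J_y$ off $A$, and any germ $s\in\mathfrak b$, read as a section of $\mathfrak B$ near $x$, lies in $\J_y$ for all $y\in U\setminus A$; that is, $s\in(\J_{[p]})_x$.

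For the reverse inclusion I would isolate the following \emph{no-gap property} and prove it first: if $\mathfrak C$ is a coherent ideal sheaf near $x$ such that every associated prime of $\mathfrak C_x$ has zero set of dimension $>p$, then any germ $s$ with $s_y\in\mathfrak C_y$ for all $y$ outside an analytic set of dimension $\le p$ already satisfies $s_x\in\mathfrak C_x$. Granting this, I would apply it with $\mathfrak C=\mathfrak B$: one has $\ass(\mathfrak B_x)\subseteq\{\p_i:d_i>p\}$, so the hypothesis is met, and since $\J\subseteq\mathfrak B$ near $x$, every $s\in(\J_{[p]})_x$ satisfies $s_y\in\J_y\subseteq\mathfrak B_y$ off a thin set, whence $s_x\in\mathfrak B_x=\mathfrak b$, as wanted.

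The heart of the argument is the no-gap property, and I would prove it by contradiction: if $s_x\notin\mathfrak C_x$, then $(\mathfrak C_x:s)$ is a proper ideal and $\Ok_x\cdot\bar s\cong\Ok_x/(\mathfrak C_x:s)$ is a submodule of $\Ok_x/\mathfrak C_x$, so it has an associated prime $\p$, which therefore lies in $\ass(\mathfrak C_x)$; by hypothesis $\dim V(\p)>p$. Since $\p\supseteq(\mathfrak C_x:s)$, the irreducible germ $V(\p)$ is contained in $V((\mathfrak C_x:s))=\supp(\Ok^X\bar s)=\{y:s_y\notin\mathfrak C_y\}$, so, having dimension $>p$, it is not contained in the exceptional set of dimension $\le p$; picking $y\in V(\p)$ off that set and arbitrarily close to $x$ gives $s_y\notin\mathfrak C_y$, a contradiction. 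I expect this last step to be the main obstacle to write cleanly: one must exhibit, inside the support of the cyclic module $\Ok_x\bar s$, an irreducible germ whose dimension is \emph{strictly} larger than $p$, and it is exactly the associated primes of $\Ok_x\bar s$ that supply it — everything else is routine bookkeeping with coherence and localization. As an alternative one could build $\mathfrak B$ directly from Siu's global primary decomposition $\J=\bigcap_j\J_j$ of \cite{S}, setting $\mathfrak B:=\bigcap_{\dim W_j>p}\J_j$ and using Siu's identification of $\ass(\J_x)$ with the components at $x$ of the associated varieties to see $\mathfrak B_x=\mathfrak b$.
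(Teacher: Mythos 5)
Your proof is correct, but it takes a genuinely different route from the paper's. The paper obtains the lemma as an immediate consequence of Siu's global theory: Theorem~4 of \cite{S} identifies the germs at $x$ of the global associated varieties with the varieties of the local associated primes of $\J_x$, and together with the fact that each $Y^p$ has dimension $\le p$ the statement is simply read off. Your argument is instead purely local and self-contained: you use only Thimm's definition of the gap sheaf, coherence of the auxiliary quotient sheaves, and elementary facts about associated primes, with your ``no-gap property'' (proved via an associated prime of the cyclic module $\Ok_x\bar s$) doing the work that Siu's Theorems~3 and~4 do in the paper. The paper's route is shorter given machinery it invokes anyway; yours has the merit of not depending on the global Noether--Lasker decomposition at all, which is natural since the assertion is local. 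One small overstatement to fix: it is not true in general that $\mathfrak b\not\subseteq\p_i$ when $d_i\le p$ --- if $\p_i$ is an embedded prime containing a minimal $\p_j$ with $d_j>p$, then $\mathfrak b\subseteq\q_j\subseteq\p_j\subseteq\p_i$ --- so the asserted equality $(\J_x:\mathfrak b)=\bigcap_{d_i\le p}\q_i$ can fail. However, the inclusion $(\J_x:\mathfrak b)\supseteq\bigcap_{d_i\le p}\q_i$ always holds (since $(\q_i:\mathfrak b)=(1)$ for $d_i>p$ and $(\q_i:\mathfrak b)\supseteq\q_i$ for $d_i\le p$), and this one-sided containment already gives $\dim A\le p$, which is all your argument uses; so the proof stands.
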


Given a subvariety $Z$ of $X$, the \emph{gap sheaf} $\J[Z]\supset\J$ is defined
as follows: A germ $s\in \Ok_x$ is in $\J[Z]_x$ if and only if it
extends to a section of $\J(U)$ for some neighborhood $U$ of $x$, where
$s_y\in\J_y$ for all $y\in U\setminus Z$. Note that 
$\J[Z]_x$ is the
intersection of all components in a primary decomposition of $\J_x$ for which the associated varieties are not
contained in $Z$.  It is not hard to see that $\J[Z]$ is 
coherent, see \cite{T}.  Observe that 
$\J_{[p]}=\J[Y^p]$.

\begin{remark}\label{gronsvart} 
We claim that in fact 
\begin{equation}\label{jamforgap}
\J_{[p]}=\J[Z^\BEF_{n-p}]. 
\end{equation} 
To see this assume first that $X$ is
smooth. 
Then the (germs of)
varieties of associated prime ideals of $\J$ of dimension $\leq p$
are precisely the (germs of)
varieties of associated prime ideals that are contained in
$Z_{n-p}^\BEF$, see, e.g.,
\cite[Corollary~20.14]{Eis}. Now \eqref{jamforgap} follows from Lemma
~\ref{sauna}.

For a general $X$, let $i: X\to Y$ be a local embedding of $X$ into a
manifold $Y$ of dimension $N$ and let $\widetilde \J=\J+\J_X$,
cf. Section \ref{singbef}. It is not hard to verify that if $\a$ is an
ideal in $\Ok^X_x$ and $\tilde  \a:= \a+(\J_X)_x$ is the corresponding ideal in $\Ok^{Y}_x$
then $\a=\cap \a_k$ is a primary decomposition of $\a$ if and only if
$\tilde\a=\cap \tilde\a_k$ is a primary decomposition of
$\tilde\a$. Hence, in light of Lemma ~\ref{sauna},   
$i^*\widetilde \J[V]=\J[V\cap X]$ and $i^* \widetilde \J_{[p]}= \J_{[p]}$. 
By the definition of
BEF-varieties in Section ~\ref{singbef}, thus 
$i^*\widetilde \J[Z_{N-p}^\BEF(\widetilde \J)] = 
\J[Z_{N-p}^\BEF(\widetilde \J)]  = 
\J[Z_{n-p}^\BEF(\J)]$, which proves \eqref{jamforgap} since 
$\widetilde \J_{[p]}=\widetilde \J[Z_{N-p}^\BEF(\widetilde \J)]$ .

\end{remark}

\smallskip

Given a residue current $R$ constructed from a Hermitian locally free resolution
of $\Ok^X/\J$ on a smooth $X$ as in Section ~\ref{rescurr}, in \cite{AW2} we showed
that the germ $R_x$ of the current $R$ at $x\in X$ can be written as $R_x=\sum
R^{\p}$, where the sum is over the associated primes of $\J_x$, and
$R^\p$ has support on the variety $V(\p)$ of $\p$ and has the SEP with
respect to $V(\p)$.

\section{Resolutions of homogeneous ideals}\label{lotta}
Let $\J$ be a coherent ideal sheaf on $\P^N$. Then there is a 
locally free resolution
$\Ok(E_\bullet^f),f^\bullet$, where $E_k$ is a direct sum of line
bundles $E_k= \bigoplus_i \Ok(-d^i_k)$ and $f^k=(f^k_{ij})$
are matrices of homogeneous forms with
$\deg f^k_{ij}= d_k^j-d_{k-1}^i$, see, e.g., \cite[Ch.1,
  Example~1.2.21]{Laz}. 
Let $J$ denote the homogeneous ideal in the graded ring
$\S=\C[z_0,\ldots,z_N]$ associated with  $\J$, and let
$\S(\ell)$ denote the module $\S$ where all degrees are shifted by $\ell$.
Then $\Ok(E^f_\bullet),f^\bullet$ corresponds to a free resolution
\begin{equation}\label{svenne}
\ldots \to\oplus_i \S(-d_k^i)\to\ldots  \to\oplus_i \S(-d_2^i)\to\oplus_i \S(-d_1^i)\to \S
\end{equation}
of the module $\S/J$. Conversely, any such free resolution corresponds
to a locally free resolution
$\Ok(E_\bullet),f^\bullet$.

Recall that the  {\it regularity} of a homogeneous module with a
minimal graded free resolution
\eqref{svenne} is defined as $\max_{k,i}(d_k^i-k)$,
see, e.g., \cite[Ch.4]{Eis2}. 
The \emph{regularity} $\reg J$ of the ideal $J$ equals $\reg
(\mathcal S/J)+1$, cf.  \cite[Exercise~4.3]{Eis2}.

If $X$ is a subvariety of $\P^N$, then the
\emph{regularity} of $X$,  $\reg X$, is defined as the regularity of $J_X$. 
Notice that if $X$ has pure dimension, then the ideal $J_X$ has pure dimension in $\S$; in particular
the ideal associated to the origin is not an associated prime ideal. 
Theorem~20.14 in \cite{Eis} thus implies that 
$Z_0^{\BEF}$ is empty. Therefore 
the depth of $\S/J_X$ is at least $1$, and hence a minimal
free resolution of $\S/J_X$ has length $\le N$.
For such a resolution we thus get 
\begin{equation}\label{spasm}
\reg X = \max_{k\le \min (M,N)} (d_k^i-k) +1.
\end{equation}
A global section of $\Ok(s)|_X\to X$ extends to a global section of
$\Ok(s)\to \P^N$ as soon as $s\geq \reg X-1$, see, e.g.,
\cite[Chapter~4]{Eis2}.

\section{Division problems on singular varieties}\label{divprob}

Let 
 $E_\bullet^g, g^\bullet$ be a complex that corresponds to a Hermitian free resolution of $\Ok^{\P^N}/\J_X$ as above, and let  $E^f_\bullet,f^\bullet$ be an
arbitrary Hermitian pointwise generically surjective complex over $\P^N$. Then the product current  
\begin{equation*}
R^f\w R^g:=R^{f,\lambda}\w R^g|_{\lambda=0}
\end{equation*}
is well-defined on $\P^n$, 
\begin{equation*}
R^f\w\omega := R^{i^*f,\lambda}\w\omega |_{\lambda=0}
\end{equation*} 
is a well-defined current on $X$, and $i_*(R^f\w\omega)= R^f\w R^g$, see
\cite[Section~2]{semester}. In particular, $R^f\w R^g$ and $R^f\w
\omega$ only depend on the restriction of $f$ to $X$, and thus these
currents are well-defined even if $f$ is only defined over $X$. 
Moreover $R^f\w R^g \phi=0$ if and only if $R^f\w \omega
i^*\phi=0$. 
On $X_{\text{reg}}$, $R^f\w\omega$ is just the product of the current $R^f$
and the smooth form $\omega$.

The current $R^f\w R^g$ is related to the  tensor product complex $E^h_\bullet, h^\bullet$, where
\begin{equation*}%
E_k^h=\bigoplus_{i+j=k} E^f_i\otimes E^g_j,
\end{equation*}
and $h=f+g$, 
cf. \cite[Section~2.5]{semester}, in a similar way as is the current
$R^h$ associated with this complex,  see \cite{A12}. 
In particular, if $\phi$ is a section of $E^h_0=E^f_0\otimes E^g_0$
such that  $R^f\w R^g\phi=0$, 
one can locally solve $f^1q+g^1q'=\phi$. 
Moreover if
\eqref{cohovillkor} is satisfied for the product complex there is a
global such section $(q,q')$ of $E^h_1=E^f_1\otimes E^g_0 \oplus
E^f_0\otimes E^g_1$.  In general,
however, $R^f\w R^g$ does not coincide with $R^h$.

In fact, the definition of $R^f$ in Section ~\ref{rescurr} works also
when $Y$ is singular. However, Proposition ~\ref{glatta} and the
duality principle do not hold in general, see, e.g., \cite{Lar2}, and
therefore $R^f$ itself is not so well suited for division problems.

\begin{ex}\label{koszuligen}

Assume that $E^f_\bullet,f^\bullet$ is the Koszul complex
generated by sections $f_j$ of $L_j=\Ok(d_j)|_X$, where $X\subset \P^N$, twisted by $S=\Ok(\rho)$,
as in Example ~\ref{koszulex}, and that $E^g_\bullet,g^\bullet$ is a complex
associated with a minimal Hermitian free resolution of $\mathcal S/J_X$ as in Section~\ref{lotta}. 
Note that then $E^h_\ell$ is a direct sum of line bundles
\[
\Ok(\rho-(d_{i_1}+\cdots + d_{i_\ell}) -d^i_{k-\ell}).
\] 
Recall that 
\begin{equation}\label{pnko}
H^k(\P^N,\Ok(\ell))=0 \quad\quad  \text{if}\quad  \ell\ge -N \quad
\text{or}\quad k<N, 
\end{equation} 
see,  e.g., \cite{Dem}. 
Thus \eqref{cohovillkor} is satisfied
if $\rho\geq d_{i_1}+\cdots + d_{i_\ell} + d^i_{N+1-\ell}-N$ for
$\ell=1,2,\ldots, \min (m,n+1)$ and all choices of $i$ and $i_j$. 
Notice that, cf., \eqref{spasm}, 
$$
 d_{N+1-\ell}^i-N =\big(d_{N+1-\ell}^i-(N+1-\ell)\big)+1-\ell\leq \reg
 X - \ell. 
$$
Hence \eqref{cohovillkor} is satisfied if  
\begin{equation}\label{enda2}
\rho\ge d_1+\cdots + d_{min(m,n+1)} - \min(m,n+1)+\reg X.
\end{equation}
Summing up we have: 
\smallskip 

\noindent
{\it If $\rho$ satisfies \eqref{enda2}
and $\phi$ is a section of $\Ok(\rho)$ on $\P^N$ such that
$R^f\w R^g\phi=0$ (or equivalently $R^f\w R^gi^*\phi=0$) then there are global sections $q_j$ of
$\Ok(\rho-d_j)$ such that $f_1q_1+\cdots +f_mq_m=\phi$ on $X$.}

\smallskip 
\noindent 
If $X$ is Cohen-Macaulay we may assume that $E^g_\bullet, g^\bullet$
ends at level $N-n$. If moreover $m\leq n$, then $E^h_\bullet,
h^\bullet$ ends at level $\leq N$ and thus \eqref{cohovillkor} is
satisfied for any $\rho$. 

\end{ex}

\begin{ex}\label{exact}
Let $F_j$ be polynomials in $\C^N$, let $\hat f_j$ be the
sections of $\Ok(\deg F_j)\to \P^N$ corresponding to $F_j$, and let $\J_{\hat f}$ be the ideal
sheaf on $\P^N$ generated by the $\hat f_j$. 
Moreover, let 
$E^f_\bullet, f^\bullet$ and $E^g_\bullet, g^\bullet$ be complexes
associated with minimal free resolutions of $\J_{\hat f}$ and $\J_X$ as in
Section ~\ref{lotta}, where $X$ is a subvariety of $\P^N$; say $E_k^f=\bigoplus \Ok(\delta_k^i)$ and   
$E_k^g=\bigoplus \Ok(d_k^i)$. 
Then $E^h_k$ is a direct sum of line bundles
$\Ok(-\delta_\ell^i-d^j_{k-\ell})$, and thus \eqref{cohovillkor} is satisfied
if $\rho\geq \delta_\ell^i + d^j_{N+1-\ell}-N$ for all $i,j, \ell$,
cf.\ Example ~\ref{koszuligen}. 
Notice that, in light of Section ~\ref{lotta}, 
\begin{equation*}
\delta_\ell^i + d^j_{N+1-\ell}-N= 
(\delta_\ell^i-\ell) + (d^j_{N+1-\ell}-(N+1-\ell))+1
\leq \reg J_{\hat f}+\reg X-1, 
\end{equation*} 
where $J_{\hat f}$ is the homogeneous ideal associated with $\J_{\hat
  f}$. Thus \eqref{cohovillkor} is satisfied if $\rho\ge \reg J_{\hat f}+\reg X-1$.

Let $Z^{\hat f}_k$ and $Z^g_\ell$ be the BEF-varieties of
$\J_{\hat f}$ and $\J_X$,
respectively. 
Theorem ~4.2 in \cite{A12} asserts that if 
\begin{equation}\label{skara2}
\codim (Z^{\hat f}_k\cap Z^g_\ell)\geq k+\ell, 
\end{equation}
then $R^f\w R^g\phi =0$ if and only if $\phi\in \J_{\hat
  f}+\J_X=\J_{f}+\J_X$, where $\J_f$ is the sheaf on $X$ generated by
the restrictions $f_j$ of $\hat f_j$, cf. Section ~\ref{singbef}. 
If moreover $\J_{\hat f}$ and $\J_X$ are both Cohen-Macaulay and the
resolutions $\Ok(E^{f}_\bullet), f^\bullet$ and $\Ok(E^g_\bullet),
g^\bullet$ have minimal length, then $R^f\w R^g = R^h$, see \cite[Theorem~4.2]{A12}. 
\end{ex}

\subsection{Distinguished varieties}\label{dist}

Let $X$ be a subvariety of $\P^N$ and let $\tilde f_j$ be sections of $L=\Ok(d)|_X$. Moreover, let
$
\blow\colon X_+\to X
$
be the normalization of the blow-up
of $X$ along $\J_{\tilde f}$, and let $W=\sum r_j W_j$ be the exceptional divisor; here $W_j$ are
irreducible Cartier divisors. The images $Z_j:=\blow(W_j)$ are called
the {\it (Fulton-MacPherson) distinguished varieties}   associated
with $\J_{\tilde f}$, see, e.g., \cite{Laz}.
If we consider ${\tilde f}=({\tilde f}_1,\ldots,{\tilde f}_m)$ as a section of $E^*:=\oplus_1^m \Ok(-d)$, then
 $\blow^* {\tilde f}={\tilde f}^0{\tilde f}'$, where ${\tilde f}^0$ is a section of the line bundle
$\Ok(-W)$  and ${\tilde f}'=({\tilde f}_1',\ldots,{\tilde f}_m')$ is a nonvanishing section of
$\blow^* E^*\otimes \Ok(W)$, where  $\Ok(W)=\Ok(-W)^{-1}$.
Furthermore,
$
\omega_{\tilde f}:=dd^c\log|{\tilde f}'|^2
$
is a smooth first Chern form for $\blow^* L\otimes\Ok(W)$. 
We will use the geometric estimate
\begin{equation}\label{elest}
\sum r_j \deg_L Z_j\le \deg_L X
\end{equation}
from  \cite[Proposition~3.1]{EL},
see also \cite[(5.20)]{Laz}.

\smallskip 

Let $R^{\tilde f}$ be the residue current associated with the Koszul complex of
the ${\tilde f}_j$ as in Example ~\ref{koszulex} and consider the regularization \eqref{mung} of $R^{\tilde f}$. 
Using the notation in Example~\ref{koszulex}, 
$
\blow^*\sigma=(1/{\tilde f}^0)\sigma',
$
where $1/{\tilde f}^0$ is a meromorphic section of $\Ok(W)$ and $\sigma'$ is a smooth
section of $\blow^*E\otimes\Ok(-W)$. It follows that
\begin{equation*}
\blow^*(\sigma\w(\dbar\sigma)^{k-1})=\frac{1}{({\tilde f}^0)^k}\sigma'\w(\dbar\sigma')^{k-1},
\end{equation*}
and hence
\begin{equation*}
\blow^*R^{\lambda}_k=\dbar |{\tilde f}^0{\tilde f}'|^{2\lambda}\w
\frac{1}{({\tilde f}^0)^k}\sigma'\w(\dbar\sigma')^{k-1} \text{ for } \Re\lambda
>> 0,
\end{equation*} 
when $k\ge 1$. 
Since ${\tilde f}'$ is nonvanishing, by \eqref{nkel} the value at $\lambda=0$
is precisely 
\begin{equation}\label{stuga1}
R_k^+:=\dbar\frac{1}{({\tilde f}^0)^k}\w \sigma'\w(\dbar\sigma')^{k-1}.
\end{equation}
Thus 
\begin{equation*}
\blow_* R^+_k=R^{\tilde f}_k.
\end{equation*}

\section{Proofs}\label{proofs}

\begin{proof}[Proof of Theorem~\ref{noethersats}]

For $j=1,\ldots, m$, let $\hat f_{j}$ be the $\deg F_j$-homogenization of
the polynomial $F_j$, considered as a section of $\Ok(\deg F_j)\to
\P^N$.   
Moreover let $g_1,\ldots, g_r$ be global generators of the ideal
sheaf $\J_X$; assume they are sections of $\Ok(d_1),\ldots, \Ok(d_r)$,
respectively. Let $\J=\J_{\hat f} +\J_X=\J_f+\J_X$. 
Then there is a locally free resolution $\Ok(E_\bullet^h), h^\bullet$
of $\Ok/\J$, where each $E_k^h$ is a direct sum of line bundles $E_k=\bigoplus_i \Ok(-d^i_k)$
and in particular $E^1=\bigoplus_1^m \Ok(-\deg F_j)\oplus_1^r
\bigoplus \Ok (-d_k)$
and $h^1=(f_1,\ldots, f_m,g_1,\ldots, g_r)=:f+g$, cf.\ Section
~\ref{lotta}. 
Let $R=R^h$ be the residue current associated with $E_\bullet
^h,h^\bullet$.

 Recall from Section
~\ref{ass} that for fixed $x\in X$, $R_x=\sum R^\p$, where the sum is over $\ass (\J_x)$  
and where $R^\p$ has the SEP with respect to $V(\p)$; in particular,
$\mathbf 1_{H_\infty} R^\p=R^\p$ if $V(\p)\subset H_\infty$ and
$\mathbf 1_{H_\infty} R^\p=0$ otherwise. 
Thus   
\begin{equation}\label{artist}
\mathbf 1_{H_\infty} R_x=\sum_{\p\in \ass (\J_x), V(\p) \subset
  H_\infty}R^\p. 
\end{equation}
In Remark \ref{gronsvart} we saw that $\a=\cap \a_k$ is a primary decomposition of
the ideal $\a$ in $\Ok^X_x$ if and only if $\tilde\a=\cap \tilde\a_k$ is
a primary decomposition of the ideal 
$\tilde\a= \a+(\J_X)_x$ in $\Ok^Y_x$. 
Thus, that $\J_f$ has no associated varieties contained in $X_\infty$
implies that,  
for a fixed $x\in X$, $\J_x$ has no (varieties of)
associated primes contained in the hyperplane $H_\infty$ at infinity 
in $\P^N$. 
We conclude, in light of \eqref{artist}, that $\mathbf
1_{H_\infty}R =0$. 
If $\phi$ is any homogenization of $\Phi$ then  $\mathbf
1_{\C^N}R\phi=0$ because of the
duality principle and hence
$R\phi=\mathbf 1_{H_\infty} R\phi +\mathbf 1_{\C^N}R\phi =0$.

Assume that the complex $E^h_\bullet,h^\bullet$ ends at level $M$ (by
Hilbert's syzygy theorem we may assume that $M\leq N+1$) and let
\begin{equation}\label{betadef}
\beta:=\max_i d_{N+1}^i-N \text{ if } M=N+1 \quad \text{ and } \beta:=0
\text{ otherwise.}
\end{equation} 
If $\rho\geq \beta$ then  \eqref{cohovillkor} is satisfied for
$E^h_\bullet, h^\bullet$ twisted by $\Ok(\rho)$ in light of
\eqref{pnko} and thus
by Proposition ~\ref{glatta} there are global
holomorphic sections $q= (q_j)$ of $\bigoplus \Ok(\rho-\deg F_j)$
and $q'=( q'_k)$ of $\bigoplus \Ok(\rho-d_k)$
over $\P^N$ such that 
$\hat f q + g q'=\phi$. Indeed, recall from the end of Section ~\ref{rescurr} that $R$ is
also the residue current associated with the twisted complex. 
Dehomogenizing gives polynomials $Q_j$, $Q_j'$, and $G_j$ in $\C^N$ such that 
\begin{equation*}
\sum F_j Q_j + \sum G_j Q_j' =\Phi
\end{equation*} 
and where $\deg (F_j Q_j)\leq \rho$. Since the $G_j$ vanish on $V$ we
get the desired solution to \eqref{hummer} on $V$, and thus 
the first part of Theorem ~\ref{noethersats} follows with $\beta$ as in
\eqref{betadef}. 

\smallskip 

If $V=\C^N$, $\Ok_X$ should be interpreted as the zero sheaf. Then
$E^h_\bullet, h^\bullet$ is a locally free resolution of $\Ok/\J_f$
and $\beta\leq \reg J_f$,  cf.\ Section
~\ref{lotta}.

\medskip

For the second part of Theorem ~\ref{noethersats}, assume that $\J_f$
has an associated variety contained in $X_\infty$. 
We are to prove that for arbitrarily large $\ell$ there is a 
polynomial $\Phi=\Phi_\ell$ of degree $\geq \ell$ in $(F_j)$ on
$V$ for which one can not solve \eqref{hummer} with $\deg (F_jQ_j)\leq \deg\Phi_\ell$.

Let $L=\Ok(1)|_X$. The hypothesis
on $\J_f$ then means that $\J_f[X_\infty]$ is strictly larger than
$\J_f$.  Therefore, since $L$ is ample, for some large
enough $s_0$ there is a global section $\psi_0$ of
$L^{\otimes s_0}\to X$ such that $\psi_0$ is in $\J_f[X_\infty]$ but not in
$\J_f$. 
Moreover we can find a global section $\psi$ of $L^{\otimes s}$ for some $s\geq 1$
such that $\psi$ does not vanish identically on any of the
associated varieties of $\J_f$ that are contained in $X_\infty$. 
We may assume that $s_0,s\geq \reg X-1$, so that $\psi_0$ and $\psi$
extend to global sections $\hat \psi_0$ and $\hat \psi$ of $\Ok(s_0)$
and $\Ok(s)$, respectively. Let $\Psi_0$ and $\Psi$ be the
corresponding dehomogenized polynomials in $\C^N$. 
For $\ell \geq 0$, let $\phi_\ell=
\psi_0\psi^\ell$ and $\Phi_\ell=\Psi_0\Psi^\ell$. 
Since $\J_f[X_\infty]_x=(\J_f)_x$ for all $x\in V$, 
$\Phi_\ell$ is in the ideal $(F_j)$ on $V$, and thus we can
solve \eqref{hummer} for $\Phi=\Phi_\ell$ on $V$. 
Assume that there is a solution to \eqref{hummer} with $\deg
(F_jQ_j)\leq \rho_\ell$. 
Then there are sections $q_j$ of $L^{\rho_\ell-\deg F_j}$ such that 
\begin{equation*}
\sum f_j q_j = z_0^{\rho_\ell-(s_0+s\ell)}\phi_\ell
\end{equation*}
on $X$. Since $\phi_\ell$ is not in $\J_f$ it follows that 
$\rho_\ell-(s_0+s\ell)\geq 1$ and 
thus $\rho_\ell\geq 1+(s_0+s\ell)\geq 1 +\deg\Phi_\ell$. 
Since $\hat\psi$ does not vanish identically at $X_\infty$, $\deg \Psi
\geq 1$ and hence $\deg\Phi_\ell\geq\ell$. 
Hence we have found $\Phi_\ell$ with the desired properties and the
second part of Theorem ~\ref{noethersats} follows.

\end{proof}

\begin{remark}\label{specfall}
If $\J_{\hat f}$ and $\J_X$ are Cohen-Macaulay and the BEF-varieties of $\J_{\hat f}$ and $\J_X$ satisfy \eqref{skara2}, then
we can choose the complex $E^h_\bullet, h^\bullet$ in the above proof
to be the tensor product of the complexes $E^f_\bullet, f^\bullet$ and
$E^g_\bullet, g^\bullet$ corresponding to minimal resolutions of
$\J_{\hat f}$ and $\J_X$, see Example ~\ref{exact}. 
In this case, by Example ~\ref{exact}, we get Theorem
~\ref{noethersats} for $\beta=\reg J_{\hat f}+ \reg X  -1$. 

\end{remark} 

The residue current technique in the preceding proof is convenient
and makes it possible to carry out the proof within our
general framework, 
but it is not
crucial.

\begin{remark}[The algebraic approach]\label{lock}

Let us first sketch an algebraic proof of the first part of
Theorem~\ref{noethersats}.  We use the notation from the proof above. 
To begin with we have to prove that $\phi$ is in $\J$, which of course
precisely corresponds to proving that $R\phi =0$. 
Since (the restriction to $V$ of) $\phi$ is in 
$\J_f$ on $V$ it follows that $\phi_{x'}$ is in $\J$ outside $H_\infty$. Since
moreover $\J=\Ok^{\P^N}$  outside $X$, we have to prove that $\phi_x\in\J_x$
for each $x\in X_\infty$.  At such a point $x$ we have a minimal primary
decomposition $\J_x=\cap_\ell \J^\ell_x$. 
Since $\J$ is coherent, $\J\subset\J^\ell$ in a neighborhood $\U$ of
$x$, where $\J^\ell$ is the coherent sheaf defined by $\J^\ell_x$. 
Let $Z^\ell$ be the zero-set of
$\J^\ell$.  Since $\phi_{x'}$ is in
$\J_{x'}$ for $x'$ outside $H_{\infty}$ it follows that $\phi_{x'}$ is in
$\J^\ell_{x'}$ for $x'\in Z^\ell\setminus H_\infty$. 
Hence $\F:=(\J^\ell+(\phi))/\J^\ell$ is
a coherent sheaf in $\U$ with support on $Z^\ell\cap H_\infty$.
Since by assumption $\J_f$ has no associated varieties contained in $X_\infty$ it
follows that $Z^\ell\cap H_\infty$ has positive codimension in
$Z^\ell$, cf.\ the proof of Theorem~\ref{noethersats} above. 
Therefore, by the
Nullstellensatz there is a holomorphic function $h$, not vanishing
identically on $Z^\ell$ such that $h\F=0$. 
In particular, $h_x\phi_x\in \J^\ell_x$. Since $h_x$ is not in the radical
of $\J^\ell_x$ and $\J^\ell_x$ is primary it follows that
$\phi_x\in\J^\ell_x$. We conclude that $\phi_x\in\J_x$. 
Notice that the last arguments above can be thought of as an algebraic version of the
SEP-argument in the proof of Theorem~\ref{noethersats} above.

Next we would like to use that $\phi\in\J$ to conclude
that there is a global holomorphic solution to $hq=\phi$. 
By a
partition of unity, using that $E^h_\bullet, h^\bullet$ is exact, one
can glue local such solutions together to obtain a global smooth solution to
$(h-\dbar)\psi=\phi$, cf. \cite[Section~4]{semester}. By solving a certain sequence of
$\dbar$-equations in $\P^N$ we can modify $\psi$ to a global
holomorphic solution $q$ to $hq=\phi$. These $\dbar$-equations are solvable if
$\rho\geq \beta$ defined by \eqref{betadef}. Alternatively, one can
directly refer to the well-known
result that there is a solution to $hq=\phi$
if $\rho\ge\reg J$, where $J$ is the homogeneous ideal corresponding
to $\J$, see, e.g., \cite[Proposition~4.16]{Eis2}. 

\smallskip
In the same way Theorem ~\ref{macsats} and ~\ref{maxsats} follow  without any reference to residues.
Probably one can also find give  an algebraic proof of Theorem \ref{max2}.

\end{remark}
In the next proof the residue
technique plays a more decisive role.

\begin{proof}[Proof of Theorem~\ref{halvglattnoether}]

Let  
\[
\rho=\max(\deg\Phi+\mu d^{c_\infty}\deg X, (d-1)\min(m,n+1)+\reg X),
\]
or if $X$ is Cohen-Macaulay and $m\leq n$ let 
$\rho=\deg\Phi+m d^{c_\infty}\deg X$, 
and let $\phi$ be the $\rho$-homogenization of $\Phi$ considered as a section of $\Ok (\rho)|_X$. Note that then $\phi=z_0^{\rho-\deg
  \Phi} \tilde \phi$, where $\tilde\phi$ is the $\deg
\Phi$-homogenization of $\Phi$. Moreover, let 
$R^{\tilde f}\wedge\omega$ be the residue current associated with the
(twisted) Koszul complex $E_\bullet^{\tilde f},{\tilde f}^\bullet$ of
the sections $\tilde f_j$ of $\Ok(d)|_X$ associated with $F_j$, and a complex $E^g_\bullet, g^\bullet$ associated with
a minimal resolution of $\Ok/\J_X$ as in Example ~\ref{koszuligen} (with $d_j=d$ for all $j$).

\smallskip 

\noindent 
\textbf{Claim:} $R^{\tilde f}\w\omega_0\phi$ has support on $Z^{\tilde f}\cap X^0$.

\smallskip 

To prove the claim, since $\omega$ is smooth on $X_{\text{reg}}$, it is enough to
show that $R^{\tilde f}\phi=0$ on $X_{\text{reg}}$. First, since $\codim Z^{\tilde f}\cap V\geq m$, the duality principle for a complete
intersection, cf.\ Example ~\ref{koszulex}, implies that  
$ R^{\tilde f}\phi=0$ on $V_{\text{reg}}$.

Next, to prove that $\mathbf 1_{X_\infty\setminus X^0}R^{\tilde f}\phi=0$ we consider the normalization of the blow-up $\blow\colon X_+ \to X$,
and let $R^+:=\sum R_k^+$ be as in Section ~\ref{dist}. 
Let $W'$ be the union of the irreducible components of $W=\blow^{-1}Z^{\tilde f}$ that are contained in
$\blow^{-1}X_\infty$. We claim that 
\begin{equation}\label{sport2}
\1_{X_\infty} R^{\tilde f}=\blow_*\big(\1_{W'}R^+\big). 
\end{equation}
In fact, by \eqref{enkelstjarna},  
\begin{equation}\label{busar}
\1_{X_\infty} R^{\tilde f}=\blow_*\big(\1_{\blow^{-1}X_{\infty}}R^+\big)=
\blow_*(\1_{\blow^{-1}X_\infty}(\1_{W'}+\1_{W\setminus W'}) R^+\big). 
\end{equation}
By, \eqref{dubbelstjarna}, 
$
\1_{\blow^{-1}X_\infty}\1_{W'}R^+=\1_{W'}R^+$. 
Moreover, 
$$
\1_{\blow^{-1}X_\infty}\1_{W\setminus W'}\dbar\frac{1}{({\tilde f}^0)^k}= 
\1_{\blow^{-1}X_\infty\cap (W\setminus W')}\dbar\frac{1}{({\tilde f}^0)^k}
=0
$$
by \eqref{dubbelstjarna} and the dimension principle, since $\blow^{-1}X_\infty\cap (W\setminus W')$ has codimension
at least $2$ in $X_+$. In view of  \eqref{stuga1}   we conclude that
 $\1_{\blow^{-1}X_\infty}\1_{W\setminus W'} R^+=0$, and thus \eqref{sport2}  follows from \eqref{busar}.

It follows from \eqref{sport2} that $\mathbf 1_{X_\infty\setminus
  X^0}R^{\tilde f} \phi=0$ if 
$\1_{W'}R^+\blow^*\phi=0$. To show that $\1_{W'}R^+\blow^*\phi$ vanishes first note that it is sufficient
to show that it vanishes in a \nbh of each point
$x$ on $W'$ where $W$ is smooth. Indeed, since $W_{\text{sing}}$ has
codimension at least $2$ in $W$, 
$\mathbf 1_{W_{\text{sing}}} \dbar(1/({\tilde f}^0)^k)=0$ by the
dimension principle. Hence, using \eqref{stuga1} and
\eqref{dubbelstjarna} we get that 
\[
\mathbf 1_{W'} R^+ = 
\mathbf 1_{W'} (\mathbf 1_{W_{\text{reg}}} + \mathbf
1_{W_{\text{sing}}}) R^+ =\mathbf 1_{W'\cap W_{\text{reg}}} R^+.
\]
Consider now $x\in 1_{W'\cap W_{\text{reg}}}$; say $x$ is contained in
the irreducible component
$W_j$ of $W'$. 
In a \nbh of $x$ we have that ${\tilde f}^0=s^{r_j} v$,
where $s$ is a local coordinate function and $v$ is nonvanishing and $r_j$ is as in Section~\ref{dist}.
Since $\phi = z_0^{\rho-\deg\Phi} \tilde \phi$, by the choice of $\rho$, $\blow^*\phi$ vanishes to order (at least)
$\mu d^{c_\infty}\deg X$ on $W'$. 

If $\Omega$ is a first Chern form for $\Ok(1)|_X$, e.g., $\Omega=dd^c\log|z|^2$, then $d\Omega$ is
a first Chern form for $L=\Ok(d)|_X$ on $X$ (notice that $d$ denotes the degree and not the differential).
By \eqref{elest} we therefore have that
$$
r_j\int_{Z_j}(d \Omega)^{\dim Z_j}\le \int_X (d\Omega)^n, 
$$
which implies that
\begin{equation*}
r_j\le d^{\codim Z_j}\deg X.
\end{equation*}
It follows that 
$\blow^*\phi$ vanishes (at least) to order  $\mu r_j$
on $W_j$ and hence it has a factor $s^{\mu r_j}$.
In a \nbh of $x$,   
$$
\dbar\frac{1}{({\tilde f}^0)^k}=\dbar \frac{1}{s^{k r_j}}\w smooth 
$$
and thus, in light of \eqref{stuga1}, 
$R_k^+\blow^*\phi=0$ for $k\leq
\mu$ there.  
Hence $\mathbf
1_{W'\cap W_{\text{reg}}}R_k^+\blow^*\phi=0$ for $k\leq
\mu$ and  $\mathbf 1_{X_\infty
  \setminus X^0} R^{\tilde f}\phi =0$. 
We conclude that $\mathbf 1_{X\setminus X^0} R^{\tilde f}\phi= 
\mathbf 1_{V_{\text{reg}}} \ R^{\tilde f}\phi + \mathbf 1_{X_\infty
    \setminus X^0} R^{\tilde f}\phi=0$, which proves the claim that $R^{\tilde
  f}\w\omega_0\phi$ has support on $Z^{\tilde f}\cap X^0$.

\smallskip

By \eqref{paxa} and the dimension principle
we conclude that $R^{\tilde f}\w\omega_0\phi$  vanishes identically, since the bidegree
of $R^{\tilde f}$ is at most $(0,m)$ and $\omega_0$ has bidegree
$(n,0)$. 
Thus $ R^{\tilde f}\w\omega_1\phi= R^{\tilde f}\w\alpha^1\omega_0\phi$, see \eqref{lingon}, vanishes
outside $X^1$. By \eqref{paxa} and the dimension principle,
it vanishes identically since the bidegree of $ R^{\tilde
  f}\w\omega_1$ is at most $(n,m+1)$. 
By induction, it follows that
$ R^{\tilde f}\w\omega_\ell\phi=0$ for each $\ell$. We conclude that
$ R^{\tilde f}\wedge\omega\phi=0$. 

\smallskip 

Since $\rho$ satisfies \eqref{enda2} (with $d_j=d)$ and $R^{\tilde
  f}\wedge\omega\phi=0$, by Example ~\ref{koszuligen} there is a
global section $q=(q_j)$ of $\sum_1^m \Ok(\rho-d)$ such that
$fq=\phi$ on $X$. Dehomogenizing gives polynomials $Q_j$ such that
\eqref{hummer} holds on $V$ and $\deg (F_jQ_j)\leq \rho$.

\end{proof}

\begin{proof}[Proof of Theorems ~\ref{macsats} and  ~\ref{max2}]
Let  
\[
\rho=\max(\deg\Phi, d_1+\ldots +d_{\min(m,n+1)} -\min(m,n+1) +\reg
X),
\]
or if $X$ is Cohen-Macaulay and $m\leq n$ let $\rho=\deg \Phi$. 
Moreover let $\phi$ be the $\rho$-homogenization
of $\Phi$ and let 
$R^{f}\wedge\omega$ be the residue current associated with the twisted Koszul
complex $E_\bullet^{f},{f}^\bullet$ of the 
$\deg F_j$-homogenizations $f_j$ of $F_j$ and a minimal
resolution of  $\Ok/\J_X$ as in 
Example ~\ref{koszuligen}.

We claim that under the hypotheses of both theorems $R^f\w\omega_0\phi$ has
support on $Z^f\cap X^0$. Since $\omega$ is smooth outside $X^0$ it is
enough to show that $R^f\phi=0$ there. First in the case of Theorem
~\ref{macsats}, $R^f$ vanishes for trivial reasons, 
since $Z^f$ is empty. In the case of Theorem ~\ref{max2}, first
$R^f\phi$ vanishes on $V_{\text{reg}}$ by the duality 
principle. Next, since by assumption \eqref{krax} holds and $Z^f$ has no
irreducible components in $X_\infty$, it holds that $\codim
(X_\infty \cap Z^f)>m$. Since the components of $R^f$ have
bidegree at most $(0,m)$, we conclude that $\mathbf
1_{X_\infty\setminus X_0} R^f=0$ by the dimension principle. This 
proves that $R^f\w\omega\phi$ has
support on $Z^f\cap X^0$. 

Now arguing as in the end of the proof of Theorem
~\ref{halvglattnoether}, we get that $R^f\w\omega\phi=0$, and the
results follow from Example \ref{koszuligen}.

\end{proof}

\begin{remark}\label{gron}
If $\deg F_j=d$, then Theorems ~\ref{macsats} and ~\ref{max2} follow 
directly from Theorem ~\ref{halvglattnoether}. 
First, notice that Theorem ~\ref{macsats} follows if we apply Theorem
~\ref{halvglattnoether} to $F_j$ with no common zeros on $X$. Indeed,
since $Z^f$ is empty, $\codim (Z^f\cap X)=\infty$ and thus 
\eqref{krax2} and \eqref{paxa} are satisfied, and moreover
$c_\infty=-\infty$.

Next, assume that $F_j$ satisfy the hypothesis of Theorem
~\ref{max2}. Since the codimension of a distinguished variety is at
most $m$ the condition that $Z^f$ satisfies \eqref{krax} and has no
irreducible component contained in $X_\infty$ means that \eqref{krax2}
is satisfied and no distinguished varieties can be contained
in $X_\infty$. Thus $c_\infty=-\infty$ and 
 $d^{c_\infty}=0$ and Theorem
~\ref{max2} follows from Theorem ~\ref{halvglattnoether}. 

\end{remark}

\def\listing#1#2#3{{\sc #1}:\ {\it #2},\ #3.}


\begin{thebibliography}{9999}








\bibitem{A2}\listing{M.\ Andersson}
{Residue currents and ideals of holomorphic functions}
{Bull.\  Sci. Math.\  {\bf 128}, (2004), 481--512}




\bibitem{A}\listing{M.\ Andersson}
{Integral representation with weights. II. Division and interpolation}
{Math.\ Z.\ {\bf 254} (2006), no. 2, 315--332}




\bibitem{A3}\listing{M.\ Andersson}
{The membership problem for polynomial ideals in terms of
residue currents}
{Ann.\  Inst.\  Fourier  {\bf 56} (2006), 101--119}






\bibitem{A12}\listing{M.\ Andersson}
{A residue criterion for strong holomorphicity}
{Arkiv Mat.\  {\bf 48} (2010), 1--15}






\bibitem{AG}\listing{M.\ Andersson \& E.\ G\"otmark}
{Explicit representation of membership of polynomial ideals}
{Math.\ Ann.\  {\bf 349} (2011), 345--365}
{}{}{}{}


\bibitem{AS}\listing{M.\ Andersson \& H.\  Samuelsson}
{A Dolbeault-Grothendieck lemma on a complex space via Koppelman
formulas}{Invent.\ math. {\bf 190} (2012), 261--297} 







\bibitem{AW1}\listing{M.\ Andersson, E.\ Wulcan}
{Residue currents with prescribed annihilator ideals}
{Ann.\ Sci.\ \'{E}cole Norm.\ Sup.\
{\bf 40} (2007), 985--1007}




\bibitem{AW2}\listing{M.\ Andersson \& E.\ Wulcan}
{Decomposition of  residue currents}
{J.\ Reine Angew.\ Math.\  {\bf 638} (2010), 103--118}


\bibitem{semester}\listing{M.\ Andersson \& E.\ Wulcan}
{Global effective versions of the Brian\c con-Skoda-Huneke theorem} 
{preprint, available at arXiv:1107.0388} 


\bibitem{BayMum}\listing{D.\  Bayer \&  D.\  Mumford}
{What can be computed in algebraic geometry?}
{Computational algebraic geometry and commutative algebra (Cortona, 1991),  1--48,
Sympos. Math., XXXIV,
Cambridge Univ. Press, Cambridge, 1993}{}

\bibitem{Bj}\listing{J.-B.\  Bj\"ork}
{Residues and $\mathcal D$-modules}
{The legacy of Niels Henrik Abel,  605ÂÂ¡Â¹651, Springer, Berlin, 2004}
{}{}{}{} 







\bibitem{BS}\listing{J.\  Brian\c con \& H.\ Skoda}
{Sur la cl\^oture int\'egrale d'un id\'eal de germes de fonctions
holomorphes en un point de $\C\sp{n}$}
{ C.\ R.\ Acad.\ Sci.\ Paris S\'er. A 278 (1974),
949--951}

\bibitem{Dem}\listing{J-P Demailly}
{Complex Analytic and Differential Geometry}
{Monograph  Grenoble (1997)}

\bibitem{DS}\listing{A.\ Dickenstein  \& C.\ Sessa}
{Canonical representatives in moderate cohomology}
{Invent. Math. {\bf 80} (1985),  417--434}

\bibitem{EL}\listing{L.\ Ein \& R.\  Lazarsfeld}
{A geometric effective Nullstellensatz}
{Invent.\ Math.\ {\bf 135} (1999), 427--448}

\bibitem{Eis}\listing{D.\ Eisenbud}
{Commutative algebra. With a view toward algebraic geometry}
{Graduate Texts in Mathematics, 160. Springer-Verlag, New York, 1995}{}{}{}

\bibitem{Eis2}\listing{D.\ Eisenbud}
{The geometry of syzygies. A second course in commutative algebra and algebraic geometry}
{Graduate Texts in Mathematics, 229. Springer-Verlag, New York, 2005}




\bibitem{Her}\listing{G.\ Hermann}
{Die Frage der endlich vielen Schritte in der Theorie der Polynomideale}
{Math.\ Ann., {\bf 95} (1926), 736--788}


\bibitem{Hun}\listing{C.\ Huneke}
{Uniform bounds in Noetherian rings}
{Invent.\ Math.\  {\bf 107}  (1992) 203--223}

\bibitem{Jel}\listing{Z.\ Jelonek}
{On the effective Nullstellensatz}
{Invent.\ Math. {\bf 162} 1--17 (2005)}


\bibitem{Lar2}\listing{R.\ L\"ark\"ang}
{On the duality theorem on an analytic variety}
{Math.\ Ann.\, to appear, available at arXiv:1007.0139}{}{}{}{}{}


\bibitem{Laz}\listing{R.\ Lazarsfeld}
{Positivity in algebraic geometry I and II}
{Springer-Verlag 2004}{}{}{}{}{}

\bibitem{Macaul}\listing{F.S.\ Macaulay}
{The algebraic theory of modular systems}
{Cambridge Univ.\ Press, Cambridge 1916}{}{}

\bibitem{MM}\listing{E.\ Mayr \& A.\ Mayer}
{The complexity of the word problem for commutative semigroups
and polynomial ideals}
{Adv.\ in math. {\bf 46} (1982), 305--329}

\bibitem{Noe}\listing{M.\ N\"other}
{\"Uber einen Satz aus der Theorie der algebraischen Functionen}
{Math.\ Ann. (1873), 351--359}


\bibitem{PTY}\listing{M.\ Passare \& A.\ Tsikh \& A.\ Yger}
{Residue currents of the Bochner-Martinelli type}
{Publicacions Mat.\ {\bf 44} (2000), 85--117}

\bibitem{Sh}\listing{B.\ Shiffman} 
{Degree bounds for the division problem in polynomial ideals}
{Michigan Math.\ J.\ {\bf 36} (1989), no. 2, 163--171}

\bibitem{S}\listing{Y.-T. Siu}
{Noether-Lasker decomposition of coherent analytic subsheaves} 
{Trans.\ Amer.\ Math.\ Soc.\ {\bf 135} 1969 375--385}



\bibitem{T}\listing{W.\ Thimm} 
{L\"uckengarben von koh\"arenten analytischen Modulgarben}
{Math.\ Ann.\ {\bf 148} 1962 372--394}

\bibitem{EW1}\listing{E.\ Wulcan}
{Sparse effective membership problems via residue currents}
{Math.\ Ann.\ {\bf 350} (2011), 661--682}

\bibitem{EW2}\listing{E.\ Wulcan}
{Some variants of Macaulay's and Max Noether's theorems}
{ J. Commut. Algebra {\bf 2} (2010), no. 4, 567--580}



\end{thebibliography}
\end{document}